\DeclareMathAlphabet{\mathpzc}{OT1}{pzc}{m}{it}
\newcommand{\ui}{[0,1]}
\newcommand{\pionex}{\pi_{1}(X,x_0)}
\newcommand{\pioneh}{\pi_{1}(\bbh,b_0)}
\newcommand{\bpp}{b_{0}^{+}}
\newcommand{\lb}{\langle}
\newcommand{\rb}{\rangle}
\newcommand{\mcc}{\mathcal{C}}
\newcommand{\mci}{\mathcal{I}}
\newcommand{\scrl}{\mathscr{L}}
\newcommand{\bbw}{\mathbb{W}}
\newcommand{\bbh}{\mathbb{H}}
\newcommand{\bbn}{\mathbb{N}}
\newcommand{\bbq}{\mathbb{Q}}
\newcommand{\bbr}{\mathbb{R}}
\newcommand{\bbt}{\mathbb{T}}
\newcommand{\bbz}{\mathbb{Z}}
\newcommand{\bbhp}{\mathbb{H}^{+}}
\DeclareMathOperator{\cl}{Cl}
\DeclareMathOperator{\sct}{Sc}
\newcommand{\pionet}{\pi_1(\bbt,t_0)}
\newtheorem{theorem}{Theorem}[section]
\newtheorem{lemma}[theorem]{Lemma}
\newtheorem{proposition}[theorem]{Proposition}
\newtheorem{corollary}[theorem]{Corollary}
\theoremstyle{definition}\newtheorem{definition}[theorem]{Definition}
\newtheorem{example}[theorem]{Example}
\newtheorem{question}[theorem]{Question}
\newtheorem{remark}[theorem]{Remark}
\newtheorem{inductiveconstruct}[theorem]{The Inductive Construction of Closure}
\begin{document}
\title{Scattered products in fundamental groupoids}
\author{Jeremy Brazas}
\date{\today}

\begin{abstract}
Infinitary operations, such as products indexed by countably infinite linear orders, arise naturally in the context of fundamental groups and groupoids. We prove that the well-definedness of products indexed by a scattered linear order in the fundamental groupoid of a first countable space is equivalent to the homotopically Hausdorff property. To prove this characterization, we employ the machinery of closure operators, on the $\pi_1$-subgroup lattice, defined in terms of test maps from one-dimensional domains.
\end{abstract}

\maketitle

\section{Introduction and Results}

\subsection{Introduction}

The are many natural situations where a group $G$ with some additional structure admits an infinitary operation such as an infinite product, i.e. elements $\prod_{n=1}^{\infty}g_n\in G$ assigned to certain $\omega$-sequences $\{g_n\}\in G^{\omega}$, where product values satisfy $\prod_{n=1}^{\infty}g_n=(g_1g_2\cdots g_k)\prod_{n=k+1}^{\infty}g_{n}$ for all $k\in\bbn$. For example, the existence and uniqueness of classical infinite sums and products in $\bbr$ depends on the topology of $\bbr$. Non-abelian groups and groupoids with infinitary operations arise naturally in the context of ``wild" algebraic topology, in particular, the study of fundamental group(oid)s of spaces with non-trivial local homotopy. The well-definedness of such products in fundamental group(oid)s of one-dimensional and planar sets plays an important role in Katsuya Eda's homotopy classification of one-dimensional Peano continua \cite{Edaonedimhomtype} and related ``automatic continuity" results for fundamental groups of one-dimensional and planar Peano continua \cite{CK,Edafreessigmaplane,Kentplanarhomom}.

The fundamental group $\pi_1$ with its familiar binary operation is the skeleton of the fundamental groupoid $\Pi_1$ and, hence, determines it's composition operation. However, the determination of infinitary operations does not not pass so easily from groups to groupoids since infinitary products in $\pi_1(X,x)$, $x\in X$ depend on point-local structure and only partially inform infinitary products in $\Pi_1(X)$, which may depend more on the global structure of $X$.

In the context of fundamental groups (resp. groupoids), infinite product values are defined in terms of the topology of representing loops (resp. paths): if $\alpha_n$ is a sequence of loops (resp. paths), then the $\omega$-product $\prod_{n=1}^{\infty}[\alpha_n]$ is the homotopy class of the infinite concatenation $(\alpha_1\cdot (\alpha_2\cdot (\alpha_3\cdot (\cdots ))))$ when the concatenation is defined and continuous. The $\omega$-product operation on homotopy classes is well-defined if path-homotopic factors result in path-homotopic infinite concatenations. There are subspaces of $\bbr^3$ for which this is not the case (see Example \ref{archipelagoexample}).

Since the components of an open set of $\ui$ may have any countable linear order type, one may also define group(oid) products $\prod_{i\in \scrl}[\alpha_i]$ indexed by any countable linear order $\scrl$. The primary purpose of this paper is to characterize those spaces whose fundamental groups and/or groupoids admit well-defined $\omega$-products and, more generally, well-defined products over scattered linear orders.

\subsection{Main Results}

To effectively formalize the well-definedness of products indexed over linear orders, we employ the fundamental group of the usual Hawaiian earring space $\bbh$ and the loop $\ell_n:(\ui,\{0,1\})\to(\bbh,b_0)$ traversing the n-th circle. The group $\pioneh$ is isomorphic to a locally free group of reduced words where the letters of each word are indexed by a countable linear order and each letter $[\ell_n]$ and it's inverse may appear at most finitely many times in each word \cite{CChegroup,Edafreesigmaproducts}. The \textit{subgroup of scattered words} $Sc\leq \pioneh$ consists of homotopy classes of loops $\alpha:(\ui,\{0,1\})\to (\bbh,b_0)$ for which $\alpha^{-1}(b_0)$ is a scattered set, or equivalently, for which the components of $\ui\backslash \alpha^{-1}(b_0)$ have a scattered order type (see  \cite{EK99subgroups}). In what follows, $\pi_1$ and $\Pi_1$ refer to the fundamental group and groupoid respectively.

\begin{definition}\label{maindef}
Let $X$ be a path-connected space. We say that $X$ has
\begin{enumerate}
\item \textit{well-defined infinite $\pi_1$-products} if for any $x\in X$ and maps $f,g:(\bbh,b_0)\to (X,x)$ such that the induced homomorphisms $f_{\#},g_{\#}:\pioneh\to\pi_1(X,x)$ agree each individual letters, i.e. $f_{\#}([\ell_n])=g_{\#}([\ell_n])$ for all $n\in\bbn$, then $f_{\#}$ and $g_{\#}$ also agree on the standard infinite product $[\ell_1\cdot(\ell_2\cdot(\ell_3\cdot (\cdots)))]$.
\item \textit{well-defined scattered $\pi_1$-products} if for any $x\in X$ and maps $f,g:(\bbh,b_0)\to (X,x)$ such that $f_{\#}([\ell_n])=g_{\#}([\ell_n])$ for all $n\in\bbn$, then $f_{\#}$ and $g_{\#}$ also agree on the subgroup $Sc\leq \pioneh$ of scattered words.
\item \textit{well-defined infinite $\Pi_1$-products} if for any paths $\alpha,\beta:\ui\to X$ that agree on $S= \{\frac{n-1}{n}\mid n\in\bbn\}\cup\{1\}$ and such that $\alpha|_{[a,b]}$ is path homotopic to $\beta|_{[a,b]}$ for each component $(a,b)$ of $\ui\backslash S$, then $[\alpha]=[\beta]$ in $\Pi_1(X)$.
\item \textit{well-defined scattered $\Pi_1$-products} if for any paths $\alpha,\beta:\ui\to X$ that agree on a closed scattered set $S\subset\ui$ containing $\{0,1\}$ and such that $\alpha|_{[a,b]}$ is path homotopic to $\beta|_{[a,b]}$ for each component $(a,b)$ of $\ui\backslash S$, then $[\alpha]=[\beta]$ in $\Pi_1(X)$.
\end{enumerate}
\end{definition}

The \textit{homotopically Hausdorff} property (see Definition \ref{homhausdorffdef}) is the most fundamental point-local obstruction to applying shape theoretic methods \cite{CF,EK98,FZ05} and generalized covering space methods \cite{Brazcat,BDLM08,FZ07,FZ013caley} to study fundamental groups of spaces that lack a simply connected covering space. Spaces known to be homotopically Hausdorff include those spaces whose fundamental group naturally injects into the first shape homotopy group \cite{FZ07}, including all one-dimensional spaces, planar sets, and certain inverse limits of manifolds. Additionally, many other examples exhibiting more intricate infinite $\pi_1$-product operations are homotopically Hausdorff, e.g. $A,B\subset\bbr^3$ in \cite{CMRZZ08}, the space $RX$ in \cite{VZ13}, and the ``Hawaiian pants" space $\mathbb{P}\subset\bbr^3$ in \cite{BFThawaiianpants}. We refer to \cite{BFTestMap,CMRZZ08,FRVZ11} for characterizations and comparisons of the homotopically Hausdorff property with other local properties. The main result of this paper is the following theorem. 

\begin{theorem}\label{overallthm}
For any path-connected first countable space $X$, the following are equivalent:
\begin{enumerate}
\item $X$ has well-defined infinite $\pi_1$-products,
\item $X$ has well-defined scattered $\pi_1$-products, 
\item $X$ has well-defined infinite $\Pi_1$-products,
\item $X$ has well-defined scattered $\Pi_1$-products,
\item $X$ is homotopically Hausdorff.
\end{enumerate}
\end{theorem}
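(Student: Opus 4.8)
The plan is to prove the five statements equivalent by a cycle of implications, with the homotopically Hausdorff property (5) serving as the pivot. Several implications are essentially formal, and I would dispose of them first. The standard infinite product word $w=[\ell_1\cdot(\ell_2\cdot(\ell_3\cdots))]$ lies in the subgroup $\sct\leq\pioneh$ of scattered words, so (2)$\Rightarrow$(1) is immediate; likewise the indexing set $\{\frac{n-1}{n}\}\cup\{1\}$ of (3) is a closed scattered set containing $\{0,1\}$, so (4)$\Rightarrow$(3) is immediate. The groupoid conditions dominate the group conditions: given $f,g\colon(\bbh,b_0)\to(X,x)$ agreeing on each letter, applying the $\Pi_1$-hypotheses to the loops $f\circ\iota$ and $g\circ\iota$ (where $\iota$ is the standard infinite-product parametrization, or the representing loop of a scattered word) yields (3)$\Rightarrow$(1) and (4)$\Rightarrow$(2), since on each complementary interval the two loops traverse $f_{\#}$ and $g_{\#}$ of a common power of some $[\ell_n]$ and are therefore path-homotopic. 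Thus it remains to establish the cycle (5)$\Rightarrow$(4)$\Rightarrow$(2)$\Rightarrow$(1)$\Rightarrow$(5), into which (3) folds via (4)$\Rightarrow$(3)$\Rightarrow$(1).

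For the equivalence (1)$\Leftrightarrow$(5) I would use two complementary ``shrinking loop'' constructions. For (5)$\Rightarrow$(1) (and, identically, (5)$\Rightarrow$(3), which I will reuse below), suppose $f,g$ agree on every letter but $f_{\#}(w)\neq g_{\#}(w)$; writing $w=(\ell_1\cdots\ell_k)\cdot w_k$ and using $f_{\#}(\ell_1\cdots\ell_k)=g_{\#}(\ell_1\cdots\ell_k)$, the fixed nontrivial element $\delta=f_{\#}(w)^{-1}g_{\#}(w)$ equals $f_{\#}(w_k)^{-1}g_{\#}(w_k)$ for every $k$, hence is represented by a loop built from the $f$- and $g$-images of the tail circles $C_{n}$, $n>k$; by continuity these images lie in any prescribed neighborhood of $x$ once $k$ is large, contradicting the homotopically Hausdorff property. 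The path version replaces $w$ by the loop $\alpha\cdot\bar\beta$ and conjugates the shrinking tail loop to the fixed basepoint $\alpha(1)$ along $\alpha$. Conversely, for (1)$\Rightarrow$(5) I would invoke first countability: a failure of the homotopically Hausdorff property supplies a point $y$, a nontrivial $g\in\pi_1(X,y)$, and loops $\gamma_n$ in a nested neighborhood basis $\{U_n\}$ all representing $g$; the map sending $C_n$ to $\gamma_n\cdot\bar\gamma_{n+1}$ is continuous, kills each letter, yet sends $w$ to the telescoping product $[\gamma_1]=g\neq 1$, so $\omega$-products are not well-defined. This is the one place the first-countability hypothesis is essential.

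The crux is (5)$\Rightarrow$(4), which I would argue by transfinite induction on the Cantor--Bendixson rank of the closed scattered set $S$ on which $\alpha$ and $\beta$ agree. The base case, $S$ finite, is a finite concatenation of the given homotopies. For the inductive step I pass from $S$ to its derived set $S'$: each component $(c,d)$ of $\ui\setminus S'$ meets $S$ only in points isolated in $S$, which can accumulate only at $c,d\in S'$, so $S\cap[c,d]$ has order type at most $\omega^{\ast}+\omega$. Splitting $[c,d]$ at an interior point of $S$ (or noting that $(c,d)$ is itself a single component of $\ui\setminus S$ when $S\cap(c,d)=\varnothing$) reduces each piece to order type $\omega$, and the already-established $\omega$-product well-definedness (5)$\Rightarrow$(3) gives $\alpha|_{[c,d]}\simeq\beta|_{[c,d]}$ rel endpoints. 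Once this holds on every component, $\alpha$ and $\beta$ agree on $S'$ and are path-homotopic on each component of $\ui\setminus S'$ — precisely the hypothesis for the lower-rank set $S'$ — and the induction delivers $[\alpha]=[\beta]$, closing the cycle.

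The main obstacle is making this transfinite induction genuinely well-founded at limit stages: passing to a single derived set $S'$ need not lower the rank when that rank is a limit ordinal, so a naive induction stalls. This is exactly where I expect the closure-operator machinery, and in particular the Inductive Construction of Scattered Closure, to do the real work: rather than peeling one derivative at a time, one builds the scattered closure of the relevant subgroup as a transfinite iteration of test-map--defined closures, feeding the $\omega$-product step into successor stages and taking unions at limit stages, and then shows that under the homotopically Hausdorff hypothesis this closure never enlarges the subgroup. Controlling the limit stages — certifying that no identifications appear ``in the limit'' beyond those already forced by the $\omega$-product engine — is the technical heart of the proof and the step I expect to demand the most care.
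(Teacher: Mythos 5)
Your treatment of the formal implications, of (1)$\Leftrightarrow$(5) (shrinking tail loops for (5)$\Rightarrow$(1); the telescoping map $\ell_n\mapsto\gamma_n\cdot\gamma_{n+1}^{-}$ under first countability for (1)$\Rightarrow$(5)), and of (5)$\Rightarrow$(3) via the loops $(\alpha|_{[t_k,1]})^{-}\cdot\beta|_{[t_k,1]}$ is sound and agrees in substance with the paper's Theorem \ref{groupthm}. The gap is exactly where you place it, and your proposed repair does not close it. If the closed scattered set $S$ has limit Cantor--Bendixson rank --- e.g.\ $S$ homeomorphic to the ordinal $\omega^{\omega}+1$, whose rank is $\omega$ --- then the derived set $S'$ (even after re-adjoining $\{0,1\}$) again has rank $\omega$: one has $(S')^{(\mu)}=S^{(1+\mu)}$ and $1+\mu=\mu$ for every infinite $\mu$, so peeling off one derivative makes no progress, and there is no ``lower-rank set $S'$'' to which your inductive hypothesis applies. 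Moreover, appealing to a transfinite iteration of subgroup closures with ``unions at limit stages'' does not address this: in the inductive construction of $\cl_{T,g}(H)$, limit stages are unions because any element of the union already lies at some earlier stage, but that ordinal parameter (the rank of an element in the closure) is unrelated to the Cantor--Bendixson rank of the cut set of one fixed loop. A loop whose cut set has limit rank is not decomposed by taking unions of subgroups; one must still argue about that loop directly.

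The missing idea, which is the paper's key move in the proof of Theorem \ref{mainthm}, is a telescoping exhaustion at limit stages: if the rank $\kappa$ of $S$ is a limit ordinal, choose countable ordinals $\kappa_1<\kappa_2<\cdots$ cofinal in $\kappa$ and nested intervals $(a_n,b_n)\in\mci(S_{\kappa_n})$ with $\bigcup_n(a_n,b_n)=(0,1)$; by Corollary \ref{indlemma1} and Lemma \ref{indlemma2} the restrictions of $\alpha$ and $\beta$ to $[a_n,b_n]$, $[a_{n+1},a_n]$, and $[b_n,b_{n+1}]$ have cut sets of rank at most $\kappa_n<\kappa$, so the inductive hypothesis applies to them; then assemble along $A=\{0,1\}\cup\{a_n\mid n\in\bbn\}\cup\{b_n\mid n\in\bbn\}$, whose set of complementary intervals has order type $\omega$, $\omega^{\ast}$, or $\zeta$, by one more application of your $\omega$-product engine (this is the content of Lemma \ref{ordertypelemma}; the $\zeta$ case splits as $\omega^{\ast}+\omega$). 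With this inserted, your direct path-level induction does go through, and it is genuinely different from the paper's route to (4): the paper never proves (5)$\Rightarrow$(4) by induction on paths, but instead passes through $(C,c_{\infty})$-closedness and the closure pairs $(W_A,w_{A,\infty})$, using the auxiliary one-dimensional space $\mathbb{Y}$ and reduced-path theory (Theorem \ref{onedtheorem}, Lemma \ref{technicallemma}). Your route, once repaired, is more elementary for the trivial subgroup; the paper's machinery buys the stronger subgroup-relative statement (Theorem \ref{mainspathproducts}).
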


Obviously, (4) $\Rightarrow$ (3) $\Rightarrow$ (1) and (4) $\Rightarrow$ (2) $\Rightarrow$ (1) since loops are paths and $\omega$ is a scattered order. The primary difficulty in the proof of Theorem \ref{overallthm} is to verify the equivalence of (2) and (4) with the other properties. In fact, we prove a subgroup-relative version of (4) $\Leftrightarrow$ (5), which applies to arbitrary subgroups of $\pionex$ (see Theorem \ref{mainspathproducts}). To achieve this, we utilize the closure operator framework introduced in \cite{BFTestMap}. In particular, we compute subgroup closures (see Theorem \ref{onedtheorem} and Corollary \ref{agree}) by applying the theory of reduced paths in one-dimensional spaces \cite{CConedim} and adapting the proof of Hausdorff's characterization of scattered linear orders (see Chapter 5 of \cite{RosensteinLO}).

The remainder of this paper is structured as follows: In Section 2, we set up notation and recall relevant background on paths and linear orders. In Section 3, we recall the closure operators on the $\pi_1$-subgroup lattice from \cite{BFTestMap} and introduce an inductive construction of these closures. In Section 4, we introduce the \textit{scattered extension} $\sct(H)$ of a subgroup $H\leq \pionex$ and relate it to closure under infinite products (see Theorem \ref{mainthm}). In Section 5, we recall the relative homotopically Hausdorff property and use the scattered extension to prove the equivalence (1) $\Leftrightarrow$ (2) $\Leftrightarrow$ (5) in Theorem \ref{overallthm} (see Theorem \ref{groupthm}). In Section 6, we introduce a closure operator that allows us to establish the equivalence of (3) and (4) with the other three properties in Theorem \ref{overallthm} (see Remark \ref{lastremark}). We conclude the paper, in Section 7, with a brief remark on products indexed by arbitrary countable order types.

\section{preliminaries and notation}

Throughout this paper, $X$ will denote a path-connected topological space and $x_0\in X$ will be a basepoint. The homomorphism induced on $\pi_1$ by a based map $f:(X,x)\to (Y,y)$ is denoted $f_{\#}:\pi_1(X,x)\to\pi_1(Y,y)$.

A \textit{path} is a continuous function $\alpha:[0,1]\to X$, which we call a \textit{loop} based at $x\in X$ if $\alpha(0)=\alpha(1)=x$. If $[a,b],[c,d]\subseteq \ui$ and $\alpha:[a,b]\to X$, $\beta:[c,d]\to X$ are maps, we write $\alpha\equiv\beta$ if $\alpha=\beta\circ \phi$ for some increasing homeomorphism $\phi: [a,b]\to [c,d]$; if $\phi$ is linear and if it does not create confusion, we will identify $\alpha$ and $\beta$. Under this identification, the restriction $\alpha|_{[a,b]}$ of a path $\alpha:\ui\to X$ is a path itself with a path-homotopy class $[\alpha|_{[a,b]}]$.

If $\alpha:\ui\to X$ is a path, then $\alpha^{-}(t)=\alpha(1-t)$ is the reverse path. If $\alpha_1,\alpha_2,\dots,\alpha_n$ is a sequence of paths such that $\alpha_{j}(1)=\alpha_{j+1}(0)$ for each $j$, then $\prod_{j=1}^{n}\alpha_j=\alpha_1\cdot \alpha_2\cdot\;\cdots\;\cdot \alpha_n$ is the path defined as $\alpha_j$ on $\left[\frac{j-1}{n},\frac{j}{n}\right]$. If $\alpha_1,\alpha_2,\alpha_3,\dots$ is an infinite sequence of paths in $X$ such that $\alpha_n(1)=\alpha_{n+1}(0)$ for all $n\in\bbn$ and there is a point $x\in X$ such that every neighborhood of $x$ contains $\alpha_n([0,1])$ for all but finitely many $n$, then the \textit{infinite concatenation} of this sequence is the path $\alpha=\prod_{n=1}^{\infty}\alpha_n=(\alpha_1\cdot(\alpha_2\cdot(\alpha_3\cdot (\cdots))))$ defined to be $\alpha_n$ on $\left[\frac{n-1}{n},\frac{n}{n+1}\right]$ and $\alpha(1)=x$.

A path $\alpha:[a,b]\to X$ is \textit{reduced} if $\alpha$ is constant or if whenever $a\leq s<t\leq b$ with $\alpha(s)=\alpha(t)$, the loop $\alpha|_{[s,t]}$ is not null-homotopic. If $X$ is a one-dimensional metric space, then every path $\alpha:\ui\to X$ is path homotopic within the image of $\alpha$ to a reduced path, which is unique up to reparameterization \cite{EdaSpatial}.

For basic theory of linearly ordered sets, we refer to \cite{RosensteinLO}. Let $\mathbf{n}$, $\omega$, and $\zeta$ denote the order types of the n-point set, the natural numbers, and $\bbz$ respectively. If $(L,\leq)$ is a linearly ordered set, let $L^{\ast}$ denote $L$ with the reverse ordering $\leq^{\ast}$ where $b\leq^{\ast} a$ if and only if $a\leq b$. If $A,B\subseteq L$, then we write $A\leq B$ if $a\leq b$ for all $a\in A$ and $b\in B$. 

\begin{definition}\label{lodef}
Let $(L,\leq)$ be a linearly ordered set.
\begin{enumerate}
\item $L$ is \textit{dense} if $L$ has more than one point and if for each $x,y\in L$ with $x<y$, there exists $z\in L$ with $x<z<y$.
\item $L$ is a \textit{scattered order} if $L$ contains no dense suborders,
\item A \textit{cut} of $L$ is a pair $(A,B)$ of disjoint sets whose union is $L$ and $A<B$. The trivial cuts are the pairs where either $A=\emptyset$ or $B=\emptyset$. Let $\mcc(L)$ denote the set of cuts of $L$ with its natural linear ordering: $(A,B)\leq (A',B')$ if $A\subseteq A'$.
\end{enumerate}
\end{definition} 

Every countable linear order embeds as a suborder of the dense ordered set $\bbq$.

\begin{definition}
A topological space $X$ is a \textit{scattered space} if every non-empty subspace of $X$ contains an isolated point.
\end{definition}

Every scattered order is a scattered space with the order topology. However, $\bbq\times\bbz$ with the lexicographical ordering is discrete but is not a scattered order since it contains a dense suborder. Generally, our use of the word ``scattered" will be clear from context, however, if confusion is possible, we will clarify by specifically stating ``scattered order" or ``scattered space." 

The following lemma is a combination of well-known results in linear order theory \cite{RosensteinLO} and descriptive set theory \cite{KechrisDST} and will be used implicitly throughout the paper.

\begin{lemma}\label{scatteredordercharlemma}
For any countable linear order $L$, the following are equivalent:
\begin{enumerate}
\item $L$ is a scattered order,
\item $\mcc(L)$ embeds as a countable compact subset of $\bbr$,
\item $\mcc(L)$ is a scattered space with the natural order topology.
\end{enumerate}
\end{lemma}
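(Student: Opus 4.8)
The plan is to treat all three conditions as statements about the single linearly ordered set $\mcc(L)$ and its order topology. The starting observation is that $\mcc(L)$ is the Dedekind completion of $L$ together with its two endpoints $(\emptyset,L)$ and $(L,\emptyset)$: every subset of $\mcc(L)$ has a supremum and an infimum (the cut determined by the union of the left-hand sets), so $\mcc(L)$ is a complete linear order with a least and a greatest element and is therefore compact in the order topology. The principal-cut map $x\mapsto(\{y:y\le x\},\{y:y>x\})$ embeds $L$ into $\mcc(L)$ as an order-dense subset, meaning that between any two distinct cuts lies a principal cut; since $L$ is countable, this exhibits a countable order-dense subset, so $\mcc(L)$ is a compact, second-countable (hence metrizable) linearly ordered space. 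A separable metrizable linearly ordered space order-embeds in $\bbr$, and compactness makes this a homeomorphism onto a compact set $K\subseteq\bbr$ on which the order topology coincides with the subspace topology. Thus condition (2) asserts exactly that $K=\mcc(L)$ is countable, and I can analyze (1), (2), (3) as properties of this one compact metric linearly ordered space.

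With this setup, (2)$\Leftrightarrow$(3) is immediate from the Cantor--Bendixson theorem \cite{KechrisDST}: a compact metrizable space is scattered as a space if and only if it has no nonempty perfect subset if and only if it is countable. Hence $\mcc(L)$ is a scattered space precisely when it is countable, which is (2).

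It remains to prove (1)$\Leftrightarrow$(2). First I reduce (1) to the statement that $\mcc(L)$ is a \emph{scattered order}: one direction is trivial because $L$ is a suborder of $\mcc(L)$ and suborders of scattered orders are scattered, while the converse follows either by citing that Dedekind completion preserves scattered orders (\cite{RosensteinLO}, Ch.\ 5) or, directly, by using the order-density of $L$ in $\mcc(L)$ to pull a dense suborder of $\mcc(L)$ back to one in $L$. I then show that for the compact set $K\subseteq\bbr$, being a scattered order is equivalent to being countable. If $K$ is uncountable, Cantor--Bendixson yields a nonempty perfect set $P\subseteq\bbr$; deleting the countably many one-sided limit points of $P$ leaves an order-dense subset, since between any two remaining points the open interval meets $P$ in an uncountable set and hence meets the remaining set, so $K$ contains a copy of $\bbq$ and is not a scattered order. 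Conversely, if $K$ contains an order-dense subset $Q$, I run a Cantor scheme: starting from $a<b$ in $Q$ and repeatedly choosing $a_s<p_s<q_s<b_s$ in $Q$ inside each interval $I_s=[a_s,b_s]$, I set $I_{s0}=[a_s,p_s]$ and $I_{s1}=[q_s,b_s]$; for each branch $\sigma\in 2^{\omega}$ the corresponding nested sequence $I^{\sigma}_{0}\supseteq I^{\sigma}_{1}\supseteq\cdots$ of closed intervals has nonempty intersection $J_{\sigma}$, and distinct branches yield disjoint $J_{\sigma}$, so the left endpoints of the $J_{\sigma}$ form $2^{\aleph_0}$ distinct points of $\overline{Q}\subseteq K$. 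Thus $K$ is uncountable, and contrapositively a countable $K$ is a scattered order. Combining, (1) $\Leftrightarrow$ $\mcc(L)$ is a scattered order $\Leftrightarrow$ $K$ is countable $\Leftrightarrow$ (2).

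The main obstacle is this last equivalence, specifically the implication that an order-dense subset of $\bbr$ has uncountable closure. Order-density is purely an order condition and, unlike genuine topological density, permits metric gaps (for instance $\{0\}\cup(\bbq\cap(1,2))$ is order-dense yet has an isolated point in its closure), so one cannot argue that order-dense points accumulate metrically. The Cantor scheme sidesteps this entirely: it uses only betweenness to branch and relies on completeness, via compactness of the closed intervals $I_s$, to guarantee nonempty intersections along each branch, never requiring the chosen points to be close in $\bbr$. A secondary point to verify carefully is the compatibility of the order topology on $\mcc(L)$ with the subspace topology of $K\subseteq\bbr$, since this is what legitimizes moving freely between the order-theoretic condition (1) and the topological conditions (2) and (3).
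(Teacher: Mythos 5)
The paper itself offers no proof of this lemma: it is stated as ``a combination of well-known results'' with citations to Rosenstein and Kechris, so your self-contained argument is necessarily a different route, and most of it is sound. In particular, $\mcc(L)$ being a complete order with endpoints (hence compact in the order topology), the Cantor--Bendixson argument for (2)$\Leftrightarrow$(3), the perfect-set argument that an uncountable compact $K\subseteq\bbr$ contains a dense suborder, and the Cantor scheme showing that a dense suborder forces $|K|=2^{\aleph_0}$ are all correct; your observation that order-density is strictly weaker than topological density, and that the Cantor scheme is what circumvents this, is exactly right.

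The genuine gap is in the bridge on which the whole proof rests: the claim that an order-embedding of $\mcc(L)$ into $\bbr$ is, ``by compactness,'' a homeomorphism onto a compact set on which the order and subspace topologies agree. Compactness of the domain upgrades a map to a homeomorphism onto its image only \emph{after} the map is known to be continuous into $\bbr$, and an order-embedding of a compact linearly ordered space need not be continuous. Concretely, the map $[0,1]\to\bbr$ sending $0\mapsto 0$ and $t\mapsto 1+t$ for $t>0$ is an order-embedding of a compact, separable, metrizable linearly ordered space whose image $\{0\}\cup(1,2]$ is not closed in $\bbr$, is not compact in the subspace topology, and carries a subspace topology strictly finer than its order topology (the point $0$ is isolated in the former but not the latter). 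So the step fails as argued. It is repairable by exhibiting a \emph{continuous} order-embedding rather than an arbitrary one: enumerate $L=\{x_1,x_2,\dots\}$ and send the cut $(A,B)$ to $\sum_{x_n\in A}2^{-n}$. This is strictly increasing, and it is continuous in the order topology: fixing $(A,B)$ and $N$, either $(A,B)$ has an immediate successor (so right-continuity is vacuous), or, letting $m$ be the least element of the finite set $B\cap\{x_1,\dots,x_N\}$ (when nonempty; otherwise any larger cut works), every cut strictly between $(A,B)$ and $\left(\{y\in L: y<m\},\{y\in L: y\geq m\}\right)$ differs from $(A,B)$ only in letters of index greater than $N$; the left side is symmetric. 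Then ``continuous injection from a compact space to a Hausdorff space'' yields everything you wanted: a compact image $K\subseteq\bbr$ with order topology equal to subspace topology. A smaller soft spot of the same nature: ``pulling back'' a dense suborder $S$ of $\mcc(L)$ to one in $L$ cannot be done elementwise, since density of $S$ gives no points of $S$ inside an interval whose endpoints are not both in $S$; but the same binary-tree scheme you already use for $K$ (choosing principal cuts nested between pairs of $S$-points) handles it, or the citation to preservation of scatteredness under completion suffices.
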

\begin{definition}
If $K$ is a non-degenerate compact subset of $\bbr$, let $\mci(K)$ denote the set of components of $[\min(K),\max(K)]\backslash K$ equipped with the ordering inherited from $\bbr$. 
\end{definition}
Note that $\mci(K)$ is always countable and if $K$ is nowhere dense (for example, if $K$ is a scattered space), then $\mcc(\mci(K))\cong K$ as spaces. On the other hand, for any countable linear order $L$ and closed interval $[a,b]$ in $\bbr$, we may identify $\mathcal{C}(L)$ with a subspace of $[a,b]$ so that $a=\min(\mathcal{C}(L))$ and $b=\max(\mathcal{C}(L))$. Any such choice of embedding $\mathcal{C}(L)\subset[a,b]$ determines an order isomorphism $\mci(\mathcal{C}(L))\cong L$.

\section{closure operators on subgroups of fundamental groups}

The following definitions are from \cite{BFTestMap} where closure operators of subgroups are introduced to characterize and compare local properties of fundamental groups.

\begin{definition}\label{testmap}
Suppose $(\bbt,t_0)$ is a based space, $T\leq \pionet$ is a subgroup, and $g\in \pionet$. A subgroup $H\leq \pionex$ is $(T,g)${\it-closed} if for every based map $f:(\bbt,t_0)\to (X,x_0)$ such that $f_{\#}(T)\leq H$, we also have $f_{\#}(g)\in H$. We refer to $(T,g)$ as a {\it closure pair} for $(\bbt,t_0)$.
\end{definition}
The set of $(T,g)$-closed subgroups of $\pionex$ is closed under intersection and therefore forms a complete lattice.

\begin{definition}
The $(T,g)${\it-closure} of a subgroup $H\leq \pionex$ is \[ \cl_{T,g}(H)=\bigcap\{K\leq \pionex\mid K\text{ is }(T,g)\text{-closed and }H\leq K\}.\]
\end{definition}

\begin{lemma}[Closure Operator Properties of $\cl_{T,g}$]{\cite[Section 2]{BFTestMap}}\label{closurepropertieslemma}
Let $(T,g)$ be a closure pair. Then $\cl_{T,g}(H)=H$ if and only if $H$ is $(T,g)$-closed. Moreover,
\begin{enumerate}
\item $H\leq \cl_{T,g}(H)$,
\item $H\leq K$ implies $\cl_{T,g}(H)\leq \cl_{T,g}(K)$,
\item $\cl_{T,g}(\cl_{T,g}(H))=\cl_{T,g}(H)$,
\item if $f:(X,x_0)\to (Y,y_0)$ is a map, then $f_{\#}(\cl_{T,g}(H))\leq \cl_{T,g}(f_{\#}(H))$ in $\pi_1(Y,y_0)$.
\end{enumerate}
\end{lemma}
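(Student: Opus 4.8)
The plan is to exploit the single structural fact noted immediately before the statement: the collection of $(T,g)$-closed subgroups of $\pionex$ is closed under arbitrary intersection. Consequently $\cl_{T,g}(H)$, being the intersection of all $(T,g)$-closed subgroups containing $H$, is itself $(T,g)$-closed, and is by construction the smallest such subgroup. Every assertion of the lemma will follow from this observation together with elementary manipulations of the defining intersection.

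First I would treat the biconditional. If $\cl_{T,g}(H)=H$, then $H$ is an intersection of $(T,g)$-closed subgroups and hence $(T,g)$-closed. Conversely, if $H$ is $(T,g)$-closed then $H$ itself appears in the family $\{K\mid K\text{ is }(T,g)\text{-closed and }H\leq K\}$, so the intersection defining $\cl_{T,g}(H)$ is contained in $H$; combined with property (1) this gives $\cl_{T,g}(H)=H$.

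Properties (1)--(3) are then purely formal. For (1), every member $K$ of the defining family satisfies $H\leq K$, so $H\leq\cl_{T,g}(H)$. For (2), when $H\leq K$ every $(T,g)$-closed subgroup containing $K$ also contains $H$, so the family for $K$ is a subfamily of that for $H$; intersecting over a smaller family yields a larger subgroup, giving $\cl_{T,g}(H)\leq\cl_{T,g}(K)$. For (3), the subgroup $\cl_{T,g}(H)$ is $(T,g)$-closed, so the biconditional applied to it yields $\cl_{T,g}(\cl_{T,g}(H))=\cl_{T,g}(H)$.

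The substantive step is (4), and this is where I expect the real work. Write $K=\cl_{T,g}(f_{\#}(H))\leq\pioney$, which is $(T,g)$-closed. The crux is to show that the preimage $f_{\#}^{-1}(K)\leq\pionex$ is again $(T,g)$-closed. To verify this, let $h:(\bbt,t_0)\to(X,x_0)$ be any based map with $h_{\#}(T)\leq f_{\#}^{-1}(K)$; then the composite $f\circ h:(\bbt,t_0)\to(Y,y_0)$ satisfies $(f\circ h)_{\#}(T)=f_{\#}(h_{\#}(T))\leq K$, so $(T,g)$-closedness of $K$ forces $(f\circ h)_{\#}(g)=f_{\#}(h_{\#}(g))\in K$, i.e. $h_{\#}(g)\in f_{\#}^{-1}(K)$. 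Thus $f_{\#}^{-1}(K)$ is $(T,g)$-closed. Since $f_{\#}(H)\leq K$ gives $H\leq f_{\#}^{-1}(K)$, minimality of the closure yields $\cl_{T,g}(H)\leq f_{\#}^{-1}(K)$, and applying $f_{\#}$ gives $f_{\#}(\cl_{T,g}(H))\leq K=\cl_{T,g}(f_{\#}(H))$, as desired. The only point requiring care is the functoriality identity $(f\circ h)_{\#}=f_{\#}\circ h_{\#}$ used to transport the test map $h$ across $f$; everything else is bookkeeping with the defining intersection.
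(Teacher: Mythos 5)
Your proof is correct. The paper gives no proof of this lemma itself---it is cited from \cite[Section 2]{BFTestMap}---but your argument is exactly the standard one used there: the biconditional and properties (1)--(3) follow formally from the fact that the $(T,g)$-closed subgroups form a Moore family (closed under intersection, with $\pionex$ itself closed so the defining family is nonempty), and (4) follows from the key observation that $f_{\#}^{-1}(K)$ is $(T,g)$-closed whenever $K\leq\pi_1(Y,y_0)$ is, which you verify correctly by composing test maps with $f$ and invoking $(f\circ h)_{\#}=f_{\#}\circ h_{\#}$.
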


The closure $\cl_{T,g}(H)$ must contain the subgroup of $\pionex$ generated by $H$ and the set of elements $f_{\#}(g)$ for all maps $f:(T,t_0)\to (X,x_0)$ with $f_{\#}(T)\leq H$. However, it is shown \cite[Remark 3.13]{BFTestMap} that $\langle H\cup \{f_{\#}(g)\mid f:(T,t_0)\to (X,x_0)\text{ with }f_{\#}(T)\leq H\}\rangle$ may not be $(T,g)$-closed and therefore may be a proper subgroup of $\cl_{T,g}(H)$. In general, we must use the following inductive procedure to construct $\cl_{T,g}(H)$ from $H$.

\begin{inductiveconstruct}
Given a closure pair $(T,g)$ and subgroup $H\leq \pionex$, set $\cl_{T,g}(H)_{\mathbf{0}}=H$ and suppose $\cl_{T,g}(H)_{\lambda}$ is defined for all ordinals $\lambda<\kappa$. If $\kappa=\lambda+1$ is a successor, let $\cl_{T,g}(H)_{\kappa}$ be the subgroup of $\pionex$ generated by $\cl_{T,g}(H)_{\lambda}$ and the elements $f_{\#}(g)$ for all maps $f:(\bbt,t_0)\to (X,x_0)$ satisfying $f_{\#}(T)\leq \cl_{T,g}(H)_{\lambda}$. If $\kappa$ is a limit ordinal, let $\cl_{T,g}(H)_{\kappa}=\bigcup_{\lambda<\kappa}\cl_{T,g}(H)_{\lambda}$, which is also a subgroup of $\pionex$.

Note that $\cl_{T,g}(H)_{\kappa}$ is $(T,g)$-closed if and only if $\cl_{T,g}(H)_{\kappa}=\cl_{T,g}(H)_{\kappa+1}$. Hence, by basic cardinal considerations, there must be some smallest ordinal $\kappa_{0}$ such that $\cl_{T,g}(H)_{\kappa}=\cl_{T,g}(H)_{\kappa_0}$ for all $\kappa\geq \kappa_{0}$. By transfinite induction, every $(T,g)$-closed subgroup containing $H$ also contains $\cl_{T,g}(H)_{\kappa}$ for each $\kappa$. Therefore, we have $\cl_{T,g}(H)=\cl_{T,g}(H)_{\kappa_0}=\bigcup_{\kappa}\cl_{T,g}(H)_{\kappa}$.
\end{inductiveconstruct}

\begin{definition}
The $(T,g)$\textit{-rank} of an element $a\in \cl_{T,g}(H)$, is the smallest ordinal $\kappa_0$ such that $a\in \cl_{T,g}(H)_{\kappa_0}$.
\end{definition}

We compare closure operators using the following remark.

\begin{remark}\cite[Proposition 2.3]{BFTestMap}\label{comparisonremark}
If $(T,g)$ and $(T',g')$ are any closure pairs for a test space $(\bbt,t_0)$, then $g'\in \cl_{T,g}(T')$ if and only if $\cl_{T',g'}(H)\leq \cl_{T,g}(H)$ for all spaces $(X,x_0)$ and subgroups $H\leq \pionex$.
\end{remark}

Let $C_n\subseteq \bbr^2$ be the circle of radius $\frac{1}{n}$ centered at $\left(\frac{1}{n},0\right)$ and $\bbh=\bigcup_{n\in\bbn}C_n$ be the usual Hawaiian earring space with basepoint $b_0=(0,0)$. Let $\ell_n(t)=\left(\frac{1}{n}(1-\cos(2\pi t)),-\frac{1}{n}\sin(2\pi t)\right)$ be the canonical counterclockwise loop traversing $C_n$ with homotopy class $a_n=[\ell_n]$. These elements freely generate the subgroup $F=\lb a_n\mid n\in\bbn\rb\leq\pi_1(\bbh,b_0)$. Let $\ell_{\infty}$ denote the infinite concatenation $\prod_{n=1}^{\infty}\ell_n$ and $a_{\infty}=[\ell_{\infty}]$.

We will consider the closure operator $\cl_{F,a_{\infty}}$ in the following section. Recall that this closure is constructed only using based maps $(\bbh,b_0)\to (X,x_0)$ and, therefore, only describes local homotopy structure at $x_0$. 

\section{The scattered extension of a subgroup}

\begin{definition}\label{schdef}
Let $H\leq \pionex$ be a subgroup. A non-constant loop $\alpha:\ui\to X$ based at $x_0$ is $H$\textit{-scattered} if there exists a closed scattered space $K\subseteq \alpha^{-1}(x_0)$ containing $\{0,1\}$ such that for every component $(a,b)\in \mci(K)$, we have $[\alpha|_{[a,b]}]\in H$. We call the set $K$ a set of $H$\textit{-cuts} for $\alpha$. The \textit{scattered extension} of $H$ is \[\sct(H)=\{[\alpha]\in\pionex\mid\alpha\text{ is }H\text{-scattered or constant}\}.\]
\end{definition}

Since the union of two closed scattered subspaces of $\ui$ is a scattered space, $\sct(H)$ is a subgroup of $\pionex$ containing $H$.

\begin{lemma}\label{sctproperties}
As an operator on subgroups of $\pionex$, $\sct$ satisfies the following:
\begin{enumerate}
\item $H\leq \sct(H)$,
\item $H\leq K$ $\Rightarrow$ $\sct(H)\leq \sct(K)$,
\item $\sct(H)\leq \sct(\sct(H))$,
\item if $H\leq \pionex$ and $f:(X,x_0)\to (Y,y_0)$ is a map, then $f_{\#}(\sct(H))\leq \sct(f_{\#}(H))$.
\end{enumerate}
\end{lemma}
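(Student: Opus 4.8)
The plan is to verify each of the four operator properties directly from Definition~\ref{schdef}, treating them in order of increasing subtlety.

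Property (1) is immediate: if $[\alpha]\in H$ with $\alpha$ non-constant, then taking $K=\{0,1\}$ gives a set of $H$-cuts for $\alpha$, since $\mci(K)$ consists of the single component $(0,1)$ and $[\alpha|_{[0,1]}]=[\alpha]\in H$; constant loops are in $\sct(H)$ by fiat. For property (2), suppose $H\leq K$ and $[\alpha]\in\sct(H)$ with $\alpha$ non-constant. A set of $H$-cuts for $\alpha$ is witnessed by some closed scattered $S\subseteq\alpha^{-1}(x_0)$ with $[\alpha|_{[a,b]}]\in H\leq K$ for each $(a,b)\in\mci(S)$; the same $S$ then serves as a set of $K$-cuts, so $[\alpha]\in\sct(K)$. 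Property (3) follows formally from (1) and (2): applying (1) to $\sct(H)$ in place of $H$ already yields $\sct(H)\leq\sct(\sct(H))$ (indeed this is literally property (1) with the subgroup $\sct(H)$), so there is essentially nothing further to prove.

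The substantive work is property (4). First I would fix a map $f\colon(X,x_0)\to(Y,y_0)$ and an element $[\alpha]\in\sct(H)$, and show $f_{\#}([\alpha])\in\sct(f_{\#}(H))$. If $\alpha$ is constant this is trivial, so assume $\alpha$ is non-constant with a set of $H$-cuts $K\subseteq\alpha^{-1}(x_0)$. The natural candidate witness for $f\circ\alpha$ being $f_{\#}(H)$-scattered is the \emph{same} set $K$: I would check that $K\subseteq(f\circ\alpha)^{-1}(y_0)$ (since $\alpha(K)=\{x_0\}$ forces $f(\alpha(K))=\{y_0\}$), that $K$ remains closed, scattered, and contains $\{0,1\}$ (these properties are intrinsic to $K\subseteq\ui$ and unaffected by $f$), and that for each component $(a,b)\in\mci(K)$ we have $[(f\circ\alpha)|_{[a,b]}]=f_{\#}([\alpha|_{[a,b]}])\in f_{\#}(H)$, using functoriality of $f_{\#}$ on the restricted loops $\alpha|_{[a,b]}$.

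The one point demanding genuine care is the possibility that $f\circ\alpha$ is \emph{constant} even though $\alpha$ is not: in that degenerate case $f_{\#}([\alpha])$ is the identity, which lies in $\sct(f_{\#}(H))$ by the clause admitting constant loops, so the conclusion still holds. Thus the only real obstacle is bookkeeping around the non-constant-versus-constant dichotomy in the definition; the topological substance — that closedness, scatteredness, and membership in the target subgroup all transport along $K$ via $f$ — is routine once one observes that $K$ itself requires no modification. I expect property (4) to be where all the attention goes, with the constant-image edge case being the single subtlety worth flagging explicitly.
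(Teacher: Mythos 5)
Your proposal is correct and matches the paper's argument: the paper likewise dismisses (1)--(3) as formal consequences of the definition and proves (4) by showing that the same cut set $S$ for $\alpha$ serves as a set of $f_{\#}(H)$-cuts for $f\circ\alpha$. Your explicit handling of the edge case where $f\circ\alpha$ is constant (absorbed by the ``or constant'' clause of Definition~\ref{schdef}) is a detail the paper leaves implicit, but it is the same proof.
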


\begin{proof}
(1) and (2) are straightforward to verify and (3) follows from (1) and (2). To verify (4), suppose $[\alpha]\in\sct(H)$ and $S\subseteq \ui$ is a set of $H$-cuts for representative $H$-scattered loop $\alpha$. For all $(a,b)\in\mci(S)$, we have $[\alpha|_{[a,b]}]\in H$ and, thus, $[f\circ \alpha|_{[a,b]}]\in f_{\#}(H)$. It follows that $S$ is a set of $f_{\#}(H)$-cuts for $f\circ\alpha$. Hence, $f_{\#}([\alpha])\in \sct(f_{\#}(H))$.
\end{proof}

\begin{remark}
Compare the previous lemma with Lemma \ref{closurepropertieslemma}, particularly the difference of property (3). We refrain from calling $\sct$ a ``closure" since, as an operator on subgroups, $\sct$ behaves more like the first step $\cl_{F,a_{\infty}}(-)_{\mathbf{1}}$ in the inductive construction of $\cl_{F,a_{\infty}}$. One should not expect $\sct(\sct(H))=\sct(H)$ to hold in general.
\end{remark}

For $H\leq \pionex$, each non-trivial element of $\sct(H)$ is represented by a non-constant loop $\alpha:[x,y]\to X$ based at $x_0\in X$ and a scattered set $S_0\subset \alpha^{-1}(x_0)$ of $H$-cuts for $\alpha$. For such a choice of representative $\alpha$ and $S_0$, we inductively define scattered subspaces $S_{\kappa}\subseteq S_0$ for each ordinal $\kappa$ so that $S_{\kappa+1}$ consists of $x$, $y$, and all non-isolated points of $S_{\kappa}$. If $\kappa$ is a limit ordinal, then $S_{\kappa}=\bigcap_{\lambda<\kappa}S_{\lambda}$. Since $S_0$ is a countable set (recall Lemma \ref{scatteredordercharlemma}), there is a minimal countable ordinal $\kappa_{0}$ such that $S_{\kappa_{0}}=\{x,y\}$. We call $rk(\alpha,S_0)=\kappa_0$ the \textit{rank of} $\alpha$ \textit{with respect to} $S_0$ and note it's similarity to the Cantor-Bendixson rank of $S_0$ \cite{KechrisDST}. Observe that if $(a,b)\in \mathcal{I}(S_{\kappa+1})$, then $(a,b)$ is the union of the discrete (or empty) set $S_{\kappa}\cap (a,b)$ and the intervals in the set $\{(c,d)\in \mathcal{I}(S_{\kappa})\mid (c,d)\subseteq (a,b)\}$, which has order type $\mathbf{n}$, $\omega$, $\omega^{\ast}$, or $\zeta$.

\begin{lemma}\label{indlemma2}
Suppose $S_0$ is a set of $H$-cuts for $H$-scattered loop $\alpha:[x,y]\to X$ based at $x_0$. Suppose $(c,d)\in \mathcal{I}(S_{\kappa })$ and $(a,b)\subseteq (c,d)$ where $a,b\in S_{\lambda}$ for $\lambda\leq \kappa $. Then $rk(\alpha|_{[a,b]},S_0\cap[a,b])\leq \kappa $.
\end{lemma}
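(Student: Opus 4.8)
The plan is to reduce everything to a single transfinite induction comparing the Cantor--Bendixson-type derivation of the restricted loop with that of the original. First I would set $R_0=S_0\cap[a,b]$ and check that this is a genuine set of $H$-cuts for $\alpha|_{[a,b]}$, so that its rank is even defined. Since $a,b\in S_\lambda\subseteq S_0\subseteq \alpha^{-1}(x_0)$, the endpoints $a,b$ map to $x_0$, and the components of $[a,b]\setminus R_0$ are exactly those $(a',b')\in\mci(S_0)$ contained in $(a,b)$; each satisfies $[\alpha|_{[a',b']}]\in H$ because $S_0$ is a set of $H$-cuts for $\alpha$. Hence $R_0$ is a closed scattered set of $H$-cuts for $\alpha|_{[a,b]}$, and I may form its derived sequence $R_\mu$ by the same recipe defining $S_\mu$: fix the endpoints $a,b$ and pass to non-isolated points at successors, intersect at limits.

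The heart of the argument is the claim that $R_\mu\cap(a,b)\subseteq S_\mu\cap(a,b)$ for every ordinal $\mu$, proved by transfinite induction. The base case $\mu=0$ is an equality, since $R_0\cap(a,b)=S_0\cap(a,b)$. The limit case passes through intersections verbatim: for limit $\mu$, $R_\mu\cap(a,b)=\bigcap_{\nu<\mu}(R_\nu\cap(a,b))\subseteq\bigcap_{\nu<\mu}(S_\nu\cap(a,b))=S_\mu\cap(a,b)$.

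The successor step is where care is needed and is the step I expect to be the main obstacle. Suppose $p\in R_{\mu+1}\cap(a,b)$. Because $p$ lies strictly between $a$ and $b$, it is not one of the retained endpoints of the restricted derivation, so membership in $R_{\mu+1}$ forces $p$ to be a non-isolated point of $R_\mu$. The key observation is that, as $p$ is \emph{interior} to $[a,b]$, every sufficiently small neighborhood of $p$ lies inside $(a,b)$; thus the points of $R_\mu$ accumulating at $p$ belong to $R_\mu\cap(a,b)$, which by the inductive hypothesis is contained in $S_\mu\cap(a,b)\subseteq S_\mu$. Therefore $p$ is non-isolated in $S_\mu$ as well. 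Since $[a,b]\subseteq[x,y]$ gives $x<p<y$, the point $p$ is also not a retained endpoint of the global derivation, so $p\in S_{\mu+1}\cap(a,b)$, completing the induction.

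Finally I would instantiate the containment at $\mu=\kappa$. Because $(a,b)\subseteq(c,d)$ and $(c,d)\in\mci(S_\kappa)$ is a component of $[x,y]\setminus S_\kappa$, we have $(a,b)\cap S_\kappa=\emptyset$, whence $R_\kappa\cap(a,b)\subseteq S_\kappa\cap(a,b)=\emptyset$. As $R_\kappa$ always contains $a,b$ and is contained in $[a,b]$, this forces $R_\kappa=\{a,b\}$, which by definition of rank gives $rk(\alpha|_{[a,b]},S_0\cap[a,b])\leq\kappa$, as desired. (The hypothesis $a,b\in S_\lambda$ with $\lambda\leq\kappa$ enters only to guarantee $a,b\in S_0$, so that the restriction is a well-formed $H$-scattered loop; the precise value of $\lambda$ is not otherwise used.)
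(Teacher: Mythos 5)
Your proof is correct and takes essentially the same approach as the paper's: both arguments rest on the observation that isolation is a local property, so the derived sets of the restricted cut set agree inside $(a,b)$ with those of $S_0$, and then evaluate at stage $\kappa$, where $S_{\kappa}\cap (c,d)=\emptyset$ forces the restricted derivation to have terminated. The only cosmetic difference is that you compare $R_\mu$ directly with $S_\mu$ via an explicit transfinite induction, whereas the paper routes the comparison through the intermediate restriction $U_0=S_0\cap[c,d]$ and deduces $rk(\alpha|_{[a,b]},T_0)\leq rk(\alpha|_{[c,d]},U_0)\leq\kappa$; the underlying mechanism is identical.
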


\begin{proof}
The sets $T_0=S_0\cap [a,b]$ and $U_0=S_0\cap [c,d]$ are sets of $H$-cuts for $\alpha|_{[a,b]}$ and $\alpha|_{[c,d]}$ respectively. Since $(c,d)\in \mathcal{I}(S_{\kappa })$, it is clear that $rk(\alpha|_{[c,d]},T_0)\leq \kappa $. For any $\lambda '$, every isolated point of $T_{\lambda '}\cap (a,b)= S_{\lambda '}\cap (a,b)$ is also an isolated point of $U_{\lambda '}\cap (c,d)= S_{\lambda '}\cap (c,d)$. Hence $rk(\alpha|_{[a,b]},T_0)\leq rk(\alpha|_{[c,d]},U_0)\leq \kappa $.
\end{proof}

\begin{corollary}\label{indlemma1}
Suppose $S_0$ is a set of $H$-cuts for $H$-scattered loop $\alpha:[x,y]\to X$. If $(c,d)\in \mathcal{I}(S_{\lambda})$, then $T_0=S_0\cap [c,d]$ is a set of $H$-cuts for $\alpha|_{[c,d]}$ and $rk(\alpha|_{[c,d]},T_0)\leq \lambda$.
\end{corollary}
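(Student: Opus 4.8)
The plan is to obtain the corollary as the degenerate case $(a,b)=(c,d)$ of Lemma \ref{indlemma2}, so that almost all of the work is already done. First I would dispose of the claim that $T_0=S_0\cap[c,d]$ is a set of $H$-cuts for $\alpha|_{[c,d]}$. Since $(c,d)\in\mci(S_\lambda)$, its endpoints lie in $S_\lambda\subseteq S_0$, so $\{c,d\}\subseteq T_0$; and $T_0$ is closed (an intersection of the closed sets $S_0$ and $[c,d]$) and scattered (a subspace of the scattered set $S_0$). Because $c,d\in S_0$, the complement $[c,d]\setminus T_0$ equals $(c,d)\setminus S_0$, whose components are exactly the intervals $(a',b')\in\mci(S_0)$ with $(a',b')\subseteq(c,d)$; for each of these $[\alpha|_{[a',b']}]\in H$ because $S_0$ is a set of $H$-cuts for $\alpha$. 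This is precisely the observation recorded at the start of the proof of Lemma \ref{indlemma2}.

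For the rank bound I would invoke Lemma \ref{indlemma2} with its ambient ordinal taken to be $\lambda$, with the same interval $(c,d)\in\mci(S_\lambda)$, and with the subinterval chosen to be $(a,b)=(c,d)$ itself. The inclusion $(c,d)\subseteq(c,d)$ is trivial, and the endpoints $c,d$ belong to $S_\lambda$, hence to $S_{\lambda'}$ with $\lambda'=\lambda\leq\lambda$, which is exactly the endpoint hypothesis required by the lemma. The conclusion then reads $rk(\alpha|_{[c,d]},S_0\cap[c,d])\leq\lambda$, that is, $rk(\alpha|_{[c,d]},T_0)\leq\lambda$, as claimed.

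There is no substantive obstacle here, since the entire content lives in Lemma \ref{indlemma2}. The only points demanding a moment's attention are checking that the lemma genuinely tolerates the degenerate choice $(a,b)=(c,d)$ — it does, because then $T_0=U_0=S_0\cap[c,d]$ and the chain of inequalities in its proof collapses to $rk(\alpha|_{[c,d]},T_0)\leq\lambda$ — together with keeping the indexing straight, as the corollary writes $\lambda$ for the ordinal that the lemma calls $\kappa$.
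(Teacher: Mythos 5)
Your proof is correct and takes essentially the same route as the paper: the corollary is stated without a separate argument precisely because it is the degenerate case $(a,b)=(c,d)$ of Lemma \ref{indlemma2} (with the lemma's $\kappa$ playing the role of the corollary's $\lambda$), which is exactly the specialization you carry out. Your explicit checks --- that $T_0$ is closed, scattered, contains $c,d\in S_\lambda\subseteq S_0$, that the components of $[c,d]\setminus T_0$ are the intervals of $\mci(S_0)$ inside $(c,d)$, and that the endpoint hypothesis of the lemma holds with $\lambda'=\lambda$ --- are just the details the paper leaves implicit.
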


\begin{lemma}\label{ordertypelemma}
If $H\leq \pionex$ and $\alpha:[x,y]\to X$ is a loop based at $x_0$ and $S\subseteq \alpha^{-1}(x_0)$ is a set of $\cl_{F,a_{\infty}}(H)$-cuts for $\alpha$ such that $\mathcal{I}(S)$ has order type $\mathbf{n}$, $\omega$, $\omega^{\ast}$, or $\zeta$, then $[\alpha]\in \cl_{F,a_{\infty}}(H)$.
\end{lemma}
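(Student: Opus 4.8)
The plan is to do a case analysis on the four possible order types of $\mci(S)$, reducing the two ``one-sided'' infinite cases and the two-sided case to a single application of the defining closure property of $\cl_{F,a_\infty}$. If $\mci(S)$ has order type $\mathbf{n}$, then $S=\{x=s_0<s_1<\cdots<s_n=y\}$ is finite, each restriction $\alpha|_{[s_{i-1},s_i]}$ is a loop at $x_0$ with $[\alpha|_{[s_{i-1},s_i]}]\in\cl_{F,a_\infty}(H)$, and $[\alpha]=\prod_{i=1}^{n}[\alpha|_{[s_{i-1},s_i]}]$ lies in the subgroup $\cl_{F,a_\infty}(H)$; this disposes of the finite case immediately.

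The heart of the argument is the case $\mci(S)\cong\omega$. Here I would first reorganize $S$ so that $\alpha$ becomes, up to reparameterization, a genuine infinite concatenation of loops at $x_0$. Enumerate the components of $[x,y]\setminus S$ as $(a_k,b_k)$, $k\geq 1$, in increasing order; since $S$ is closed, the right endpoints $b_k$ increase to some $c=\sup_k b_k\in S$ with $\alpha(c)=x_0$, and any material between consecutive components, as well as the tail $[c,y]$, lies in $S$ and is therefore sent by $\alpha$ to $x_0$. Setting $c_0=x$ and $c_k=b_k$, the loop $g_k:=\alpha|_{[c_{k-1},c_k]}$ is path homotopic to $\alpha|_{[a_k,b_k]}$ (collapse the constant initial segment), so $[g_k]\in\cl_{F,a_\infty}(H)$; moreover, as $c_{k-1},c_k\to c$ and $\alpha$ is continuous at $c$ with $\alpha(c)=x_0$, the images $g_k(\ui)$ shrink to $x_0$ and $\alpha$ is path homotopic to the infinite concatenation $\prod_{k=1}^{\infty}g_k$. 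Now I invoke the standard construction of maps out of the Hawaiian earring: because the $g_k$ shrink to $x_0$, there is a based map $f:(\bbh,b_0)\to(X,x_0)$ with $f\circ\ell_k\equiv g_k$ for every $k$, whence $f\circ\ell_\infty\equiv\prod_k g_k$ and $f_{\#}(a_k)=[g_k]\in\cl_{F,a_\infty}(H)$ for all $k$, i.e. $f_{\#}(F)\leq\cl_{F,a_\infty}(H)$. Since $\cl_{F,a_\infty}(H)$ is $(F,a_\infty)$-closed (Lemma \ref{closurepropertieslemma}), we conclude $[\alpha]=[f\circ\ell_\infty]=f_{\#}(a_\infty)\in\cl_{F,a_\infty}(H)$.

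The remaining two cases reduce to this one. If $\mci(S)\cong\omega^{\ast}$, I would apply the $\omega$ case to the reversed loop $\bar\alpha(t)=\alpha(x+y-t)$ with the reflected cut set $\bar S=\{x+y-s\mid s\in S\}$: its components are the reverses of those of $\alpha$, so each restriction of $\bar\alpha$ has class $[\alpha|_{[a,b]}]^{-1}\in\cl_{F,a_\infty}(H)$, $\mci(\bar S)\cong\omega$, and the $\omega$ case gives $[\alpha]^{-1}=[\bar\alpha]\in\cl_{F,a_\infty}(H)$. If $\mci(S)\cong\zeta$, pick the right endpoint $p\in S$ of any single component; then $S\cap[x,p]$ and $S\cap[p,y]$ are cut sets for the loops $\alpha|_{[x,p]}$ and $\alpha|_{[p,y]}$ (both based at $x_0$ since $p\in S$) with $\mci$ of order types $\omega^{\ast}$ and $\omega$ respectively, so the two preceding cases place both $[\alpha|_{[x,p]}]$ and $[\alpha|_{[p,y]}]$ in $\cl_{F,a_\infty}(H)$, and their product $[\alpha]$ lies there as well.

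I expect the main technical obstacle to be the bookkeeping in the $\omega$ case rather than any conceptual difficulty: one must carefully justify that $\alpha$ equals $\prod_k g_k$ up to reparameterization after absorbing the (possibly infinitely many) constant pieces coming from points of $S$ lying between or beyond the components, and one must verify continuity of $f$ at $b_0$ directly from the shrinking condition on the $g_k$ together with the continuity of each individual $g_k$ at its endpoints. Once these routine verifications are in place, the identity $f_{\#}(a_\infty)=[\alpha]$ and the single appeal to $(F,a_\infty)$-closedness complete the proof.
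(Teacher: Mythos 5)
Your proof is correct and follows essentially the same route as the paper's: the finite case via closure under finite products, the $\omega$ case by constructing a test map $f:(\bbh,b_0)\to(X,x_0)$ carrying the $\ell_k$ to the successive subloops and invoking the $(F,a_{\infty})$-closedness of $\cl_{F,a_{\infty}}(H)$, the $\omega^{\ast}$ case by reversal, and the $\zeta$ case by splitting at a point of $S$ into an $\omega^{\ast}$-piece and an $\omega$-piece. The only difference is that you spell out the constant-segment/reparameterization bookkeeping and the continuity of $f$, which the paper's proof leaves implicit.
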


\begin{proof}
If $\mathcal{I}(S)$ has order type $\mathbf{n}$, then $[\alpha]$ factors as a finite product of elements of the subgroup $\cl_{F,a_{\infty}}(H)$ and therefore lies in $\cl_{F,a_{\infty}}(H)$. If $\mathcal{I}(S)$ can be ordered as
\[(c_1,d_1)<(c_2,d_2)<(c_3,d_3)<\cdots \]
define $f:(\bbh,b_0)\to (X,x_0)$ so that $f\circ \ell_n\equiv \alpha|_{[c_n,d_n]}$. The continuity of $f$ follows from the continuity of $\alpha$. Since $f_{\#}(a_n)=[\alpha|_{[c,d]}]\in \cl_{F,a_{\infty}}(H)$ for all $n\in\bbn$, we have $[\alpha]=f_{\#}(a_{\infty})\in \cl_{F,a_{\infty}}(H)$. If $\mathcal{I}(S)$ has order type $\omega^{\ast}$, then $\mathcal{I}(\{x+y-t\mid t\in S\})$ has order type $\omega$. The previous case applies to give $[\alpha^{-}]\in \cl_{F,a_{\infty}}(H)$ and therefore $[\alpha]\in \cl_{F,a_{\infty}}(H)$. If $\mathcal{I}(S)$ has order type of $\zeta$, fix any $t\in (x,y)\cap S$. Then $\mathcal{I}(S\cap [t,y])$ has order type $\omega$ and $\mathcal{I}(S\cap [x,t])$ has order type $\omega^{\ast}$. It follows that $[\alpha|_{[t,y]}]\in \cl_{F,a_{\infty}}(H)$ and $[\alpha|_{[x,t]}]\in \cl_{F,a_{\infty}}(H)$ by the first and second cases respectively. Thus $[\alpha]=[\alpha|_{[x,t]}\cdot\alpha|_{[t,y]}]\in \cl_{F,a_{\infty}}(H)$.
\end{proof}

\begin{theorem}\label{mainthm}
For any subgroup $H\leq \pionex$, we have $\sct(H)\leq \cl_{F,a_{\infty}}(H)$.
\end{theorem}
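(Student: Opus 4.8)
The plan is to prove, by transfinite induction on the rank, the following slightly more flexible statement: for every ordinal $\lambda$, every loop $\beta$ based at $x_0$, and every set $T_0$ of $H$-cuts for $\beta$ with $rk(\beta,T_0)\le\lambda$, one has $[\beta]\in\cl_{F,a_{\infty}}(H)$. Granting this, the theorem follows at once: a nontrivial element of $\sct(H)$ is represented by an $H$-scattered loop $\alpha$ with some set of $H$-cuts $S_0$, and applying the statement with $\lambda=rk(\alpha,S_0)$ places $[\alpha]$ in $\cl_{F,a_{\infty}}(H)$, while the constant loop contributes only the identity. The base case $\lambda=0$ is immediate, since $rk(\beta,T_0)=0$ forces $T_0=\{x,y\}$, whence $\mci(T_0)$ is a single interval and $[\beta]\in H\le\cl_{F,a_{\infty}}(H)$.

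For the successor step $\lambda=\mu+1$, I would use the decomposition recorded just before Lemma \ref{indlemma2}: writing $T_{\nu}$ for the iterated derivatives of $T_0$, the single component $(x,y)\in\mci(T_{\mu+1})$ is the union of the discrete set $T_{\mu}\cap(x,y)$ together with the intervals of $\mci(T_{\mu})$, and $\mci(T_{\mu})$ has order type $\mathbf{n}$, $\omega$, $\omega^{\ast}$, or $\zeta$. For each $(c,d)\in\mci(T_{\mu})$ the endpoints lie in $T_{\mu}\subseteq\beta^{-1}(x_0)$, so $\beta|_{[c,d]}$ is a loop at $x_0$; by Corollary \ref{indlemma1}, $T_0\cap[c,d]$ is a set of $H$-cuts for it of rank at most $\mu$, so the inductive hypothesis gives $[\beta|_{[c,d]}]\in\cl_{F,a_{\infty}}(H)$. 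Thus $T_{\mu}$ is a set of $\cl_{F,a_{\infty}}(H)$-cuts for $\beta$ whose complementary intervals have one of the four order types, and Lemma \ref{ordertypelemma} yields $[\beta]\in\cl_{F,a_{\infty}}(H)$.

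The main obstacle is the limit step, because a set of $H$-cuts can genuinely have limit rank: an endpoint of $\beta$ may be a point of Cantor--Bendixson rank $\lambda$ in $T_0$, and passing to derivatives does not lower the rank below $\lambda$, so the naive reduction fails. Here I would first fix a point $s\in T_0\cap(x,y)$ (which exists because $T_0\neq\{x,y\}$) and treat $[x,s]$ and $[s,y]$ separately, so that it suffices to handle, say, $[s,y]$. If $y\notin T_0^{(\lambda)}$, then every point of $T_0\cap[s,y]$ has Cantor--Bendixson rank $<\lambda$; since this set is compact and countable its rank is a successor $\le\lambda$, hence $<\lambda$ because $\lambda$ is a limit, and the inductive hypothesis applies directly. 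If instead $y\in T_0^{(\lambda)}$, then $y$ is a left-limit point of $T_0$, so I can choose $s=t_0<t_1<t_2<\cdots$ in $T_0$ with $t_n\uparrow y$. Each subinterval $[t_n,t_{n+1}]$ lies in $(x,y)$, so all points of $T_0\cap[t_n,t_{n+1}]$ have rank $<\lambda$ and, as above, $rk(\beta|_{[t_n,t_{n+1}]},T_0\cap[t_n,t_{n+1}])<\lambda$; the inductive hypothesis gives $[\beta|_{[t_n,t_{n+1}]}]\in\cl_{F,a_{\infty}}(H)$. The set $\{t_n\}\cup\{y\}$ is then a set of $\cl_{F,a_{\infty}}(H)$-cuts for $\beta|_{[s,y]}$ with complementary intervals of order type $\omega$, so Lemma \ref{ordertypelemma} gives $[\beta|_{[s,y]}]\in\cl_{F,a_{\infty}}(H)$; the interval $[x,s]$ is handled symmetrically (using a sequence $r_m\downarrow x$ and order type $\omega^{\ast}$ when $x\in T_0^{(\lambda)}$), and $[\beta]=[\beta|_{[x,s]}\cdot\beta|_{[s,y]}]\in\cl_{F,a_{\infty}}(H)$.

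I expect the limit step to be the crux: the successor step is essentially dictated by the decomposition before Lemma \ref{indlemma2} together with Lemma \ref{ordertypelemma}, whereas the limit step requires recognizing that derivatives fail to reduce limit rank and instead peeling the high-rank endpoints off with monotone sequences. The one ancillary fact I would record is that a nonempty countable compact subset of $\bbr$ has successor Cantor--Bendixson rank (a standard finite-intersection-property argument via Lemma \ref{scatteredordercharlemma}), which is exactly what converts ``all points have rank $<\lambda$'' into ``the set has rank $<\lambda$'' when $\lambda$ is a limit.
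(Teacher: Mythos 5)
Your proof is correct, and its skeleton --- transfinite induction on $rk(\alpha,S_0)$, with the base case and the successor case resolved via Corollary \ref{indlemma1} followed by Lemma \ref{ordertypelemma} --- is exactly the paper's. Where you genuinely diverge is the limit step, which both you and the paper identify as the crux. The paper uses the countability of $\kappa$ to pick a cofinal $\omega$-sequence of ordinals $\kappa_1<\kappa_2<\cdots\to\kappa$, chooses nested intervals $(a_n,b_n)\in\mci(S_{\kappa_n})$ with $\bigcup_n(a_n,b_n)=(x,y)$, bounds the ranks of the ``annular'' pieces $[a_{n+1},a_n]$ and $[b_n,b_{n+1}]$ by Lemma \ref{indlemma2}, and applies Lemma \ref{ordertypelemma} to the closed set $\{x,y\}\cup\{a_n\mid n\in\bbn\}\cup\{b_n\mid n\in\bbn\}$, whose complementary intervals have order type $\omega$, $\omega^{\ast}$, or $\zeta$. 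You instead split at an interior cut point $s$ and peel each endpoint of full limit rank with a monotone sequence of points of $T_0$, bounding the rank of every resulting piece by the standard fact (a finite-intersection-property argument) that a nonempty countable compact subset of $\bbr$ has successor Cantor--Bendixson rank; this fact does double duty, replacing both Lemma \ref{indlemma2} and the cofinal-ordinal-sequence argument. Both routes terminate in Lemma \ref{ordertypelemma}, and indeed the paper's nested intervals $(a_n,b_n)$ are in effect a two-sided, simultaneous version of your endpoint peeling. Your version buys a more self-contained limit step (no appeal to the cofinality of $\kappa$, no separate Lemma \ref{indlemma2}); the cost is the auxiliary successor-rank fact plus one bookkeeping point you should make explicit: the paper's derivatives $S_\mu$ always retain the endpoints, so your dichotomy ``$y\in T_0^{(\lambda)}$ or not'' only makes sense for ordinary derived sets, and the identification $S_\mu=\{x,y\}\cup T_0^{(\mu)}$ (an easy induction) is what licenses passing between the two notions of rank.
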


\begin{proof}
Every non-trivial element of $\sct(H)$ is represented by a non-constant $H$-scattered loop $\alpha:[x,y]\to X$ based at $x_0$ that admits a set $S_0$ of $H$-cuts for $\alpha$. We show, by induction on $rk(\alpha,S_0)$, that $[\alpha]\in \cl_{F,a_{\infty}}(H)$ for all such pairs $(\alpha,S_0)$. If $rk(\alpha,S_0)=0$, then $S_0=\alpha^{-1}(x_0)=\{x,y\}$ and it follows that $[\alpha]\in H\leq \cl_{F,a_{\infty}}(H)$. Suppose that for any $H$-scattered loop $\beta:[x,y]\to X$ based at $x_0$ and choice of $H$-cuts set $S_0$ with $rk(\beta,S_0)<\kappa$, we have $[\beta]\in \cl_{F,a_{\infty}}(H)$. Consider $H$-scattered loop $\alpha$ based at $x_0$ with $H$-cut set $S_0$ such that $rk(\alpha,S_0)=\kappa$. 

If $\kappa=\lambda+1$ is a successor ordinal, then $\mathcal{I}(S_{\lambda})$ has order type $\mathbf{n}$, $\omega$, $\omega^{\ast}$, or $\zeta$. Fix $(c,d)\in \mathcal{I}(S_{\lambda})$ and $T=S_0\cap [c,d]$, then $T$ is a set of $H$-cuts for $\alpha|_{[c,d]}$ and, thus, $rk(\alpha|_{[c,d]},T)\leq \lambda$ by Corollary \ref{indlemma1}. Since $rk(\alpha|_{[c,d]},T)<\kappa$, we have $[\alpha_{[c,d]}]\in \cl_{F,a_{\infty}}(H)$ by our induction hypothesis. Since $[\alpha_{[c,d]}]\in \cl_{F,a_{\infty}}(H)$ for all $(c,d)\in \mathcal{I}(S_{\lambda})$, we have $[\alpha] \in \cl_{F,a_{\infty}}(H)$ by Lemma \ref{ordertypelemma}.

Recall that $\kappa$ is countable. So if $\kappa$ is a limit ordinal, we may find an $\omega$-sequence $0<\kappa_1<\kappa_2<\cdots <\kappa$ of countable ordinals converging to $\kappa$. Hence $\bigcap_{n\in\bbn}S_{\kappa_n}=\{x,y\}$. Replacing $\{\kappa_n\}$ with a subsequence if necessary, we may find $(a_n,b_n)\in \mathcal{I}(S_{\kappa_n})$ so that
\[(a_1,b_1)\subseteq (a_2,b_2)\subseteq \cdots \text{ and }\bigcup_{n\in\bbn}(a_n,b_n)=(x,y)\] 
Note that it is possible for at most one of the sequences $\{a_n\}$ or $\{b_n\}$ to be eventually constant. By Lemma \ref{indlemma1}, we have $rk(\alpha|_{[a_n,b_n]},S_0\cap [a_n,b_n])\leq \kappa_n<\kappa$. Therefore, $[\alpha|_{[a_n,b_n]}]\in \cl_{F,a_{\infty}}(H)$ for each $n\in \bbn$ by our induction hypothesis. 

If $a_{n+1}<a_n<b_{n+1}$, then by Lemma \ref{indlemma2}, we have $rk(\alpha|_{[a_{n+1},a_n]},S_0\cap [a_{n+1},a_n])\leq \kappa_{n+1}<\kappa$ and, thus, $[\alpha|_{[a_{n+1},a_n]}]\in \cl_{F,a_{\infty}}(H)$. Similarly, if $a_{n+1}<b_n<b_{n+1}$, then $[\alpha|_{[b_{n},b_{n+1}]}]\in \cl_{F,a_{\infty}}(H)$. Now $A=\{x,y\}\cup\{a_n\mid n\in\bbn\}\cup \{b_n\mid n\in\bbn\}$ is a closed set in $A$ such that $\mathcal{I}(A)$ has order type $\omega$, $\omega^{\ast}$, or $\zeta$ and if $(c,d)\in \mathcal{I}(A)$, then $[\alpha|_{[c,d]}]\in  \cl_{F,a_{\infty}}(H)$. By Lemma \ref{ordertypelemma}, we have $[\alpha]\in \cl_{F,a_{\infty}}(H)$. This completes the induction.
\end{proof}

\begin{corollary}\label{equality1cor}
For any $H\leq \pionex$, we have\[\cl_{F,a_{\infty}}(H)=\sct(\cl_{F,a_{\infty}}(H))=\cl_{F,a_{\infty}}(\sct(H)).\]
\end{corollary}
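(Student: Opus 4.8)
The plan is to obtain both equalities purely formally from Theorem \ref{mainthm} together with the operator properties already recorded in Lemmas \ref{closurepropertieslemma} and \ref{sctproperties}; no further path-theoretic work is needed. For brevity I would write $C(-)=\cl_{F,a_{\infty}}(-)$ throughout and argue each equality by double inclusion.

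For the first equality $C(H)=\sct(C(H))$, the inclusion $C(H)\leq \sct(C(H))$ is immediate from Lemma \ref{sctproperties}(1) applied to the subgroup $C(H)$ in place of $H$. For the reverse inclusion, I would apply Theorem \ref{mainthm} with $C(H)$ in the role of $H$, which gives $\sct(C(H))\leq C(C(H))$, and then invoke idempotence of the closure (Lemma \ref{closurepropertieslemma}(3)) to rewrite $C(C(H))=C(H)$. Combining the two inclusions yields the claim.

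For the second equality $C(H)=C(\sct(H))$, I would again use double inclusion. Since $H\leq \sct(H)$ by Lemma \ref{sctproperties}(1), monotonicity of $C$ (Lemma \ref{closurepropertieslemma}(2)) yields $C(H)\leq C(\sct(H))$. Conversely, Theorem \ref{mainthm} gives $\sct(H)\leq C(H)$; applying the monotone operator $C$ and then idempotence produces $C(\sct(H))\leq C(C(H))=C(H)$, which gives equality.

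The only substantive content is Theorem \ref{mainthm}, which is already in hand, so the corollary carries no new geometric difficulty. The main thing to get right is the bookkeeping: each application of Theorem \ref{mainthm} must be to the correct subgroup, and idempotence must be invoked on the closure $C$ and \emph{not} on $\sct$, since (as noted in the Remark following Lemma \ref{sctproperties}) the scattered extension need not satisfy $\sct(\sct(H))=\sct(H)$. Beyond this care, there is no genuine obstacle.
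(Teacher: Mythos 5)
Your proof is correct and takes essentially the same approach as the paper: both derive the corollary purely formally from Theorem \ref{mainthm} together with the operator properties in Lemmas \ref{closurepropertieslemma} and \ref{sctproperties}, with idempotence invoked only on $\cl_{F,a_{\infty}}$. The only cosmetic difference is that the paper strings the inclusions into a single cycle beginning and ending at $\cl_{F,a_{\infty}}(H)$, whereas you split the claim into two separate double-inclusion arguments.
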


\begin{proof}
Applying the operator properties of $\cl_{F,a_{\infty}}$ (Lemma \ref{closurepropertieslemma}) and $\sct$ (Lemma \ref{sctproperties}) and Theorem \ref{mainthm}, the following sequence of inclusions proves the desired equalities.
\begin{eqnarray*}
\cl_{F,a_{\infty}}(H) &\leq & \sct(\cl_{F,a_{\infty}}(H))\\
&\leq & \sct(\cl_{F,a_{\infty}}(\sct(H)))\\
&\leq & \cl_{F,a_{\infty}}(\cl_{F,a_{\infty}}(\sct(H)))\\
&=& \cl_{F,a_{\infty}}(\sct(H))\\
&\leq & \cl_{F,a_{\infty}}(\cl_{F,a_{\infty}}(H))\\
&=& \cl_{F,a_{\infty}}(H)
\end{eqnarray*}
\end{proof}

\begin{corollary}
For any subgroup $H\leq \pionex$, the following are equivalent:
\begin{enumerate}
\item $\sct(H)$ is $(F,a_{\infty})$-closed,
\item $\sct(H)=\cl_{F,a_{\infty}}(H)$,
\item $\sct(H)=\sct(\sct(H))$.
\end{enumerate}
\end{corollary}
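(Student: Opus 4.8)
The plan is to prove the three statements equivalent by establishing the cycle (1) $\Rightarrow$ (2) $\Rightarrow$ (3) $\Rightarrow$ (1). The first two implications are formal consequences of the two identities packaged in Corollary \ref{equality1cor}, together with the operator properties in Lemmas \ref{closurepropertieslemma} and \ref{sctproperties}; the only implication that requires returning to the definitions is (3) $\Rightarrow$ (1).

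For (1) $\Rightarrow$ (2), I would note that if $\sct(H)$ is $(F,a_{\infty})$-closed then Lemma \ref{closurepropertieslemma} gives $\cl_{F,a_{\infty}}(\sct(H))=\sct(H)$, while Corollary \ref{equality1cor} identifies $\cl_{F,a_{\infty}}(\sct(H))=\cl_{F,a_{\infty}}(H)$, so $\sct(H)=\cl_{F,a_{\infty}}(H)$. For (2) $\Rightarrow$ (3), I would apply $\sct$ to the equation in (2) and invoke the identity $\sct(\cl_{F,a_{\infty}}(H))=\cl_{F,a_{\infty}}(H)$ from Corollary \ref{equality1cor}, obtaining $\sct(\sct(H))=\sct(\cl_{F,a_{\infty}}(H))=\cl_{F,a_{\infty}}(H)=\sct(H)$. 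Both are purely algebraic rearrangements of already-proved identities.

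The substantive step is (3) $\Rightarrow$ (1), which I would argue straight from the definition of $(F,a_{\infty})$-closedness. Assuming $\sct(\sct(H))=\sct(H)$, let $f:(\bbh,b_0)\to(X,x_0)$ be any based map with $f_{\#}(F)\leq\sct(H)$; since $F=\langle a_n\mid n\in\bbn\rangle$, this means $f_{\#}(a_n)\in\sct(H)$ for every $n$. I would then exhibit the representing loop $\alpha=f\circ\ell_{\infty}$ of $f_{\#}(a_{\infty})$ as an $\sct(H)$-scattered loop: the set $S=\{\frac{n-1}{n}\mid n\in\bbn\}\cup\{1\}$ is a closed scattered subset of $\ui$ contained in $\alpha^{-1}(x_0)$, it contains $\{0,1\}$, and $\mci(S)$ has order type $\omega$, while on each component $(\frac{n-1}{n},\frac{n}{n+1})$ we have $\alpha|_{[\frac{n-1}{n},\frac{n}{n+1}]}\equiv f\circ\ell_n$ and hence $[\alpha|_{[\frac{n-1}{n},\frac{n}{n+1}]}]=f_{\#}(a_n)\in\sct(H)$. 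Thus $S$ is a set of $\sct(H)$-cuts for $\alpha$, so $f_{\#}(a_{\infty})=[\alpha]\in\sct(\sct(H))=\sct(H)$ (the trivial case of a constant $\alpha$ being immediate since $\sct(H)$ is a subgroup), which says exactly that $\sct(H)$ is $(F,a_{\infty})$-closed. I do not expect a serious obstacle; the one point to get right is recognizing that testing $(F,a_{\infty})$-closedness of $\sct(H)$ amounts precisely to passing the canonical order-type-$\omega$ scattered loop $f\circ\ell_{\infty}$ through the definition of $\sct$, so that closedness collapses exactly to the idempotence hypothesis $\sct(\sct(H))=\sct(H)$.
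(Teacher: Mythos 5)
Your proposal is correct and follows essentially the same route as the paper: the implications (1) $\Rightarrow$ (2) $\Rightarrow$ (3) are formal consequences of Corollary \ref{equality1cor} (the paper packages them into the single chain $\sct(H)\leq \sct(\sct(H))\leq \cl_{F,a_{\infty}}(\sct(H))=\cl_{F,a_{\infty}}(H)$, which is only cosmetically different from your two substitutions), and your key step (3) $\Rightarrow$ (1) is exactly the paper's argument, testing $(F,a_{\infty})$-closedness against $f\circ\ell_{\infty}$ with the scattered cut set $\{\frac{n-1}{n}\mid n\in\bbn\}\cup\{1\}$.
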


\begin{proof}
By Theorem \ref{mainthm} and Corollary \ref{equality1cor}, the following inequality holds for all $H$:
\[\sct(H)\leq \sct(\sct(H))\leq \cl_{F,a_{\infty}}(\sct(H))=\cl_{F,a_{\infty}}(H).\]
(1) $\Rightarrow$ (2) $\Rightarrow$ (3) follows immediately. Suppose (3) holds, and $f:(\bbh,b_0)\to (X,x_0)$ is a map such that $f_{\#}(F)\leq \sct(H)$. Then $\{1\}\cup \{\frac{n-1}{n}\mid n\in\bbn\}$ is a closed scattered set of $\sct(H)$-cuts for $f\circ \ell_{\infty}$. Hence $f_{\#}(a_{\infty})=[f\circ\ell_{\infty}]\in\sct(\sct(H))=\sct(H)$. This proves $\sct(H)$ is $(F,a_{\infty})$-closed. 
\end{proof}

\section{The Homotopically Hausdorff Property and Scattered Products in fundamental groups}

\begin{definition}{\cite{CMRZZ08,FZ07}}\label{homhausdorffdef}
We say a path-connected space $X$ is \textit{homotopically Hausdorff relative to a subgroup} $H\leq \pionex$ if for every $x\in X$ and every path $\alpha:\ui\to X$ from $\alpha(0)=x_0$ to $\alpha(1)=x$, only the trivial right coset of $H^{\alpha}=[\alpha^{-}]H[\alpha]$ in $\pi_1(X,x)$ has arbitrarily small representatives, that is, if for every $g\in \pi_1(X,x)\backslash H^{\alpha}$, there is an open neighborhood $U$ of $x$ such that there is no loop $\delta:(\ui,\{0,1\})\to (U,x)$ with $H^{\alpha}g=H^{\alpha}[\delta]$.

If $X$ is homotopically Hausdorff relative to the trivial subgroup $H=1$, we say $X$ is \textit{homotopically Hausdorff}.
\end{definition}

\begin{remark}
A space $X$ is homotopically Hausdorff if and only if for every point $x\in X$, there are no non-trivial elements of $\pi_1(X,x)$ that have a representative loop in every neighborhood of $x$.
\end{remark}

Let $\bbhp=\bbh\cup ([-1,0]\times \{0\})$ be the Hawaiian earring with a ``whisker" attached with basepoint $\bpp=(-1,0)$ and where $\iota:\ui\to \bbhp$, $\iota(t)=(t-1,0)$ is the inclusion of the whisker. Define $c_n=[\iota]a_{n}[\iota^{-}]$, $c_{\infty}=[\iota]a_{\infty}[\iota^{-}]$, and $C=[\iota]F[\iota^{-}]$ in $\pi_1(\bbhp,\bpp)$. Equipped with Theorem \ref{mainthm}, we provide a thorough study of the $(C,c_{\infty})$-closure and explicitly compute the closures needed for our characterizations of the homotopically Hausdorff property.


\begin{proposition}\label{inclusionprop}
For any $H\leq\pionex$, we have $\cl_{F,a_{\infty}}(H)\leq \cl_{C,c_{\infty}}(H)$.
\end{proposition}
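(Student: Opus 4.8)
The plan is to recast the desired inequality as a statement about closed subgroups. Observe that $\cl_{F,a_{\infty}}(H)\leq \cl_{C,c_{\infty}}(H)$ holding for all $H$ is equivalent to the single assertion that \emph{every $(C,c_{\infty})$-closed subgroup of $\pionex$ is automatically $(F,a_{\infty})$-closed}. Indeed, granting this, set $K=\cl_{C,c_{\infty}}(H)$. By Lemma \ref{closurepropertieslemma} the subgroup $K$ is $(C,c_{\infty})$-closed, hence $(F,a_{\infty})$-closed, and since $H\leq K$, the minimality of the $(F,a_{\infty})$-closure gives $\cl_{F,a_{\infty}}(H)\leq K=\cl_{C,c_{\infty}}(H)$. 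So I would reduce the whole proposition to proving this one closedness implication.

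The engine of the argument is a single based collapse map. I would define $q:(\bbhp,\bpp)\to(\bbh,b_0)$ by $q|_{\bbh}=\mathrm{id}_{\bbh}$ together with $q(t,0)=b_0$ for every point $(t,0)$ of the whisker $[-1,0]\times\{0\}$. The two formulas agree at $(0,0)=b_0$, each is continuous on a closed piece of $\bbhp$, so $q$ is continuous by the pasting lemma, and $q(\bpp)=b_0$ makes it based. The key computation is that $q\circ\iota$ is the constant loop at $b_0$, so $q_{\#}([\iota])=1$; consequently $q_{\#}(c_n)=q_{\#}([\iota]a_n[\iota^{-}])=a_n$ and likewise $q_{\#}(c_{\infty})=a_{\infty}$, which yields $q_{\#}(C)=F$ and $q_{\#}(c_{\infty})=a_{\infty}$ exactly.

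With $q$ in hand the closedness implication is immediate. Let $K\leq\pionex$ be $(C,c_{\infty})$-closed and let $f:(\bbh,b_0)\to(X,x_0)$ be any based map with $f_{\#}(F)\leq K$; I must show $f_{\#}(a_{\infty})\in K$. I would form the composite $g=f\circ q:(\bbhp,\bpp)\to(X,x_0)$. Then $g_{\#}(C)=f_{\#}(q_{\#}(C))=f_{\#}(F)\leq K$, so the defining property of $(C,c_{\infty})$-closedness (Definition \ref{testmap}) forces $g_{\#}(c_{\infty})\in K$. Since $g_{\#}(c_{\infty})=f_{\#}(q_{\#}(c_{\infty}))=f_{\#}(a_{\infty})$, this gives $f_{\#}(a_{\infty})\in K$, so $K$ is $(F,a_{\infty})$-closed, completing the reduction.

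There is no serious obstacle here: the content lies entirely in recognizing that a based map out of $\bbh$ can be promoted to a based map out of $\bbhp$ simply by collapsing the whisker, so that $(C,c_{\infty})$-closedness is at least as strong as $(F,a_{\infty})$-closedness. The only point demanding genuine care is the bookkeeping for $q_{\#}$, namely verifying that $q\circ\iota$ is null (in fact constant) and hence that $q_{\#}$ carries the conjugated data $(C,c_{\infty})$ precisely onto the unconjugated data $(F,a_{\infty})$; this is exactly what guarantees that the composite $g$ feeds the correct hypothesis into the $(C,c_{\infty})$-closure. I would note in passing that this is the expected direction, since the whisker lets the wedge point be sent anywhere reachable by a path from $x_0$, so the $(C,c_{\infty})$-closure detects more than the purely point-local $(F,a_{\infty})$-closure.
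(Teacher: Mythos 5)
Your proposal is correct and takes essentially the same approach as the paper: the paper uses exactly this whisker-collapsing retraction $\bbhp\to\bbh$, notes $f_{\#}(C)=F$ and $f_{\#}(c_{\infty})=a_{\infty}$ so that $a_{\infty}\in\cl_{C,c_{\infty}}(F)$, and then concludes by citing Remark \ref{comparisonremark}. Your reduction to ``every $(C,c_{\infty})$-closed subgroup is $(F,a_{\infty})$-closed'' and the precomposition argument $g=f\circ q$ are precisely the proof of the relevant direction of that remark, so the two arguments coincide in substance.
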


\begin{proof}
The retraction $f:\bbhp\to\bbh$ collapsing the whisker to a point satisfies $f_{\#}(C)=F$ and $f_{\#}(c_{\infty})=a_{\infty}$. Hence, $a_{\infty}\in \cl_{C,c_{\infty}}(F)$ and we may apply Remark \ref{comparisonremark}.
\end{proof}

\begin{corollary}\label{equality2cor}
For any $H\leq \pionex$, we have \[\cl_{C,c_{\infty}}(H)=\sct(\cl_{C,c_{\infty}}(H))=\cl_{C,c_{\infty}}(\sct(H)).\]
\end{corollary}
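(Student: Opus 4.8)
The plan is to reproduce the argument of Corollary \ref{equality1cor} essentially verbatim, replacing $\cl_{F,a_{\infty}}$ with $\cl_{C,c_{\infty}}$ throughout. That proof rested on exactly three ingredients: the operator properties of the closure (Lemma \ref{closurepropertieslemma}), the operator properties of $\sct$ (Lemma \ref{sctproperties}), and the comparison inclusion $\sct(H)\leq \cl_{F,a_{\infty}}(H)$ supplied by Theorem \ref{mainthm}. The first two ingredients transfer without change: $(C,c_{\infty})$ is a closure pair for the test space $(\bbhp,\bpp)$, so Lemma \ref{closurepropertieslemma} applies to $\cl_{C,c_{\infty}}$ and gives monotonicity and idempotence, while Lemma \ref{sctproperties} is a statement about $\sct$ that does not reference any particular closure. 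Hence the only genuinely new fact I must establish is the analogue of Theorem \ref{mainthm} for the whiskered closure, namely
\[ \sct(H)\leq \cl_{C,c_{\infty}}(H)\qquad\text{for all } H\leq \pionex. \]

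This inclusion is immediate from results already in hand. Theorem \ref{mainthm} gives $\sct(H)\leq \cl_{F,a_{\infty}}(H)$, and Proposition \ref{inclusionprop} gives $\cl_{F,a_{\infty}}(H)\leq \cl_{C,c_{\infty}}(H)$; composing the two yields $\sct(H)\leq \cl_{C,c_{\infty}}(H)$ directly.

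With this inclusion available, the two asserted equalities follow from the same telescoping chain used in Corollary \ref{equality1cor}:
\begin{eqnarray*}
\cl_{C,c_{\infty}}(H) &\leq & \sct(\cl_{C,c_{\infty}}(H))\\
&\leq & \sct(\cl_{C,c_{\infty}}(\sct(H)))\\
&\leq & \cl_{C,c_{\infty}}(\cl_{C,c_{\infty}}(\sct(H)))\\
&=& \cl_{C,c_{\infty}}(\sct(H))\\
&\leq & \cl_{C,c_{\infty}}(\cl_{C,c_{\infty}}(H))\\
&=& \cl_{C,c_{\infty}}(H).
\end{eqnarray*}
Here the first step is Lemma \ref{sctproperties}(1); the second combines $H\leq \sct(H)$ with monotonicity of the closure and of $\sct$; the third applies the new inclusion $\sct(K)\leq \cl_{C,c_{\infty}}(K)$ to $K=\cl_{C,c_{\infty}}(\sct(H))$; the equalities are idempotence of $\cl_{C,c_{\infty}}$; and the fifth step is monotonicity applied to $\sct(H)\leq \cl_{C,c_{\infty}}(H)$. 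Since the first and last terms of the chain coincide, every intermediate term equals $\cl_{C,c_{\infty}}(H)$; reading off the second and fourth terms gives $\cl_{C,c_{\infty}}(H)=\sct(\cl_{C,c_{\infty}}(H))$ and $\cl_{C,c_{\infty}}(H)=\cl_{C,c_{\infty}}(\sct(H))$ respectively.

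I do not anticipate any real obstacle: all of the substantive content lives in Theorem \ref{mainthm} and Proposition \ref{inclusionprop}, and this corollary is a purely formal consequence. The only point that deserves a moment's verification is that Lemma \ref{closurepropertieslemma} is indeed being invoked for the correct closure pair, i.e. that its monotonicity and idempotence statements hold for $\cl_{C,c_{\infty}}$ because that lemma is stated for the closure associated to an arbitrary closure pair $(T,g)$, here instantiated at $(C,c_{\infty})$ for the test space $(\bbhp,\bpp)$.
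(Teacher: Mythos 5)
Your proof is correct and is essentially identical to the paper's: the paper likewise combines Theorem \ref{mainthm} with Proposition \ref{inclusionprop} to get $\sct(H)\leq \cl_{C,c_{\infty}}(H)$ and then invokes the same telescoping chain of inclusions from Corollary \ref{equality1cor}. The only difference is that you write the chain out explicitly, which the paper leaves to the reader.
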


\begin{proof}
For any $H\leq \pionex$, Theorem \ref{mainthm} and Proposition \ref{inclusionprop} give the inclusion $\sct(H)\leq \cl_{C,c_{\infty}}(H)$. Hence, the analogous sequence of inclusions used in the proof of Corollary \ref{equality1cor} applies.
\end{proof}

\begin{corollary}\label{ccinfcorollary}
For any $H\leq \pionex$, we have $\sct(H)=\cl_{C,c_{\infty}}(H)$ if and only if $\sct(H)$ is $(C,c_{\infty})$-closed.
\end{corollary}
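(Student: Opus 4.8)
The plan is to derive both implications formally from Corollary \ref{equality2cor} together with the closure-operator characterization in Lemma \ref{closurepropertieslemma}, so that no new geometric input is required beyond what has already been established via Theorem \ref{mainthm} and Proposition \ref{inclusionprop}. The key identity I will lean on throughout is $\cl_{C,c_{\infty}}(\sct(H))=\cl_{C,c_{\infty}}(H)$, supplied directly by Corollary \ref{equality2cor}.

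For the forward direction, I would assume $\sct(H)=\cl_{C,c_{\infty}}(H)$. The idempotence property (3) of Lemma \ref{closurepropertieslemma} gives $\cl_{C,c_{\infty}}(\cl_{C,c_{\infty}}(H))=\cl_{C,c_{\infty}}(H)$, which, by the ``if and only if'' clause of that same lemma, means that $\cl_{C,c_{\infty}}(H)$ is itself $(C,c_{\infty})$-closed. Substituting the assumed equality then immediately yields that $\sct(H)$ is $(C,c_{\infty})$-closed.

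For the converse, I would assume $\sct(H)$ is $(C,c_{\infty})$-closed. The ``if and only if'' clause of Lemma \ref{closurepropertieslemma} then gives $\cl_{C,c_{\infty}}(\sct(H))=\sct(H)$. Chaining this with the identity $\cl_{C,c_{\infty}}(\sct(H))=\cl_{C,c_{\infty}}(H)$ from Corollary \ref{equality2cor} produces $\sct(H)=\cl_{C,c_{\infty}}(H)$, completing the equivalence.

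I expect no substantial obstacle here: the genuine content resides entirely in Corollary \ref{equality2cor}, whose proof already absorbed the comparison $\sct(H)\leq \cl_{C,c_{\infty}}(H)$ coming from Theorem \ref{mainthm} and Proposition \ref{inclusionprop}. The present statement is a purely formal repackaging, analogous to the three-condition corollary following Corollary \ref{equality1cor} but restricted to the two conditions that transfer verbatim to the $(C,c_{\infty})$-closure.
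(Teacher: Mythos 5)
Your proof is correct and matches the argument the paper intends: Corollary \ref{ccinfcorollary} is stated without a printed proof precisely because it follows formally from Corollary \ref{equality2cor} (giving $\cl_{C,c_{\infty}}(\sct(H))=\cl_{C,c_{\infty}}(H)$) together with the closure-operator properties of Lemma \ref{closurepropertieslemma}, exactly as you argue. Both directions are handled the same way the paper handles the analogous $(F,a_{\infty})$ corollary, so there is nothing to add.
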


\begin{definition}
For a space $X$ and subset $A\subseteq X$, let 
\begin{itemize}
\item[] $F(X,A)=\{[\alpha]\in \pionex\mid \alpha^{-1}(A)\text{ is finite or }\alpha\text{ is constant}\}$,
\item[] $\sct(X,A)=\{[\alpha]\in \pionex\mid \alpha^{-1}(A)\text{ is a scattered space or }\alpha\text{ is constant}\}$.
\end{itemize}
\end{definition}

Note that $F(X,A)$ and $\sct(X,A)$ are subgroups of $\pionex$ such that $\sct(X,A)\leq \sct(F(X,A))$. In fact, since the closed scattered subspaces of $\bbr$ are precisely the closed countable subsets of $\bbr$, $\sct(X,A)$ is precisely the subgroup of countable cut-points $CCP(X,A,x_0)$ in \cite[Example 3.13]{BFTestMap}.

\begin{example}
As a special case, we call the group $Sc=\sct(\bbh,\{b_0\})\leq \pioneh$ the \textit{subgroup of scattered words}. It is known that $Sc$ is algebraically free on uncountably many generators (See \cite{CChegroup} or \cite{EK99subgroups}).
\end{example}

\begin{proposition}\cite[Proposition 3.14]{BFTestMap}\label{sctxaisccinfclosedprop}
If $X$ is a one-dimensional metric space and $A\subseteq X$ is closed, then $\sct(X,A)$ is $(C,c_{\infty})$-closed.
\end{proposition}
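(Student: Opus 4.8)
The plan is to verify the closure condition of Definition \ref{testmap} directly. Unwinding the definitions of $C=[\iota]F[\iota^{-}]$ and $c_\infty=[\iota]a_{\infty}[\iota^{-}]$, I must show: given a based map $f:(\bbhp,\bpp)\to(X,x_0)$ with $f_{\#}(c_n)\in\sct(X,A)$ for every $n\in\bbn$, the class $f_{\#}(c_\infty)$ also lies in $\sct(X,A)$. Write $u=f\circ\iota$ (a path from $x_0$ to $p:=f(b_0)$), $\lambda_n=f\circ\ell_n$, and note $f\circ\ell_\infty=\prod_{n=1}^{\infty}\lambda_n$, so that $f_{\#}(c_n)=[u\cdot\lambda_n\cdot u^{-}]$ and $f_{\#}(c_\infty)=[\delta]$ with $\delta=u\cdot(\prod_{n=1}^{\infty}\lambda_n)\cdot u^{-}$. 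Since $X$ is one-dimensional and metric, I work throughout with reduced representatives \cite{EdaSpatial,CConedim}, writing $\widehat{\sigma}$ for the reduced representative of a path $\sigma$. The first tool I record is that, for closed $A$, a class $[\sigma]$ lies in $\sct(X,A)$ if and only if $\widehat{\sigma}^{-1}(A)$ is scattered (equivalently, by Lemma \ref{scatteredordercharlemma} and closedness of $A$, countable): the reverse implication is immediate since $\widehat{\sigma}$ is a representative, while the forward implication is the statement that reduction does not enlarge the $A$-preimage, since reducing a path only deletes null-homotopic subintervals and carries the reduced path into the image of the original.

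Granting this, the hypothesis becomes $\widehat{\gamma_n}^{-1}(A)$ countable for all $n$, where $\gamma_n=u\cdot\lambda_n\cdot u^{-}$, and the goal is to show $\widehat{\delta}^{-1}(A)$ countable. The crucial preliminary step is that the reduced whisker $\widehat{u}$ has countable $A$-preimage. Here I use that $\lambda_n([0,1])=f(C_n)$ converges to $p$: for every neighborhood $V$ of $p$ we have $\lambda_n([0,1])\subseteq V$ for all large $n$, so the maximal terminal subpath of $\widehat{u}$ that $\lambda_n$ can retrace (cancel against) in forming $\widehat{\gamma_n}$ has image in $V$ and shrinks as $n\to\infty$. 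Writing the resulting reduced form as $\widehat{\gamma_n}\equiv \widehat{u}|_{[0,s_n]}\cdot\nu_n\cdot\widehat{u}|_{[0,s_n]}^{-}$, the piece $\widehat{u}|_{[0,s_n]}$ exhausts $\widehat{u}$ off a terminal interval mapping into $V$, so $\widehat{u}^{-1}(A)\cap[0,s_n]\subseteq\widehat{\gamma_n}^{-1}(A)$ is countable. Since a reduced path is never constant on a subinterval, the parameters $s_n$ can be pushed to $1$ by letting $V$ run through a countable neighborhood base at $p$, whence $\widehat{u}^{-1}(A)$ is countable. The same computation shows each $\widehat{\lambda_n}^{-1}(A)$ is countable: the surviving core $\nu_n$ is a subpath of $\widehat{\lambda_n}$ with $\nu_n^{-1}(A)\subseteq\widehat{\gamma_n}^{-1}(A)$, while the cancelled prefix and suffix of $\widehat{\lambda_n}$ coincide (reversed) with subpaths of $\widehat{u}$, so their $A$-preimages sit inside $\widehat{u}^{-1}(A)$.

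With these countability facts in hand, I analyze $\widehat{\delta}$ through the reduced-path calculus in one-dimensional spaces: the reduced representative of a finite or convergent infinite concatenation of reduced paths is, up to reparametrization, a concatenation of restrictions of its factors together with a single limit point arising from the convergence of $\prod_{n}\lambda_n$ to $p$. Thus $\widehat{\delta}$ is assembled from countably many subpaths, each a restriction of $\widehat{u}$, $\widehat{u}^{-}$, or some $\widehat{\lambda_n}$, plus one limit point; hence $\widehat{\delta}^{-1}(A)$ is contained in a countable union of copies of subsets of the countable sets $\widehat{u}^{-1}(A)$ and $\widehat{\lambda_n}^{-1}(A)$, together with one extra point, and is therefore countable. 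Being also closed, $\widehat{\delta}^{-1}(A)$ is a closed countable, hence scattered, subset of $[0,1]$, so $[\delta]=f_{\#}(c_\infty)\in\sct(X,A)$ by the characterization above. This proves $\sct(X,A)$ is $(C,c_\infty)$-closed.

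I expect the main obstacle to be the structural control of the reduced form of the infinite concatenation near its convergence point, and in particular the verification that $\widehat{u}$ is $A$-scattered. The subtlety is that a terminal portion of $\widehat{u}$ near $p$ could a priori traverse $A$ uncountably while being cancelled inside every individual $\widehat{\gamma_n}$; the resolution is precisely the shrinking of the $\lambda_n$ to $p$, which forbids any fixed positive portion of $\widehat{u}$ from being cancelled by $\lambda_n$ for all large $n$. Making the ``assembled from restrictions of the factors'' statement precise---that reduction of a convergent concatenation of reduced paths creates no new passages through $A$ beyond those of the factors and the single limit point---is the technical heart, and is where one-dimensionality of $X$, through the uniqueness and locality of reduced representatives \cite{CConedim,EdaSpatial}, is essential.
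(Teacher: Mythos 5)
Since the paper only cites this result from \cite[Proposition 3.14]{BFTestMap} and gives no proof of its own, your attempt can only be judged on its merits; unfortunately, it has a genuine gap. Your overall reduction is sound: working with reduced representatives is the right move, and your ``first tool'' (that $[\sigma]\in\sct(X,A)$ if and only if $\widehat{\sigma}^{-1}(A)$ is scattered, equivalently countable) is correct for one-dimensional metric $X$. The proof breaks at the step you yourself identify as crucial: the claims that $\widehat{u}^{-1}(A)$ and each $\widehat{\lambda_n}^{-1}(A)$ are countable are both \emph{false} under the stated hypotheses. Take $X=\ui$, $A$ the middle-thirds Cantor set, $x_0=0$, and let $f:(\bbhp,\bpp)\to(X,0)$ send the whisker to $u(t)=t$ and collapse every circle of $\bbh$ to the point $p=1$. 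Then $f_{\#}(c_n)=1\in\sct(X,A)$ for all $n$, so the hypothesis $f_{\#}(C)\leq \sct(X,A)$ holds; yet $\widehat{u}=u$ is reduced and $\widehat{u}^{-1}(A)=A$ is uncountable. Your argument for countability of $\widehat{u}^{-1}(A)$ tacitly assumes that the terminal portion of $\widehat{u}$ cancelled in forming $\widehat{\gamma_n}$ is cancelled against $\widehat{\lambda_n}$ and therefore has image in $V$; that is exactly what fails. When $[\lambda_n]$ is trivial (or its reduced form is short), $\widehat{u}$ cancels \emph{through} $\lambda_n$ against the reversed copy $\widehat{u}^{-}$, and the cancelled portion can be all of $\widehat{u}$, nowhere near $p$. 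In the example, $\widehat{\gamma_n}$ is constant, so in your decomposition $s_n=0$ for every $n$, and nothing forces $s_n\to 1$. The claim for $\widehat{\lambda_n}$ fails similarly: attach a circle $S$ to $\ui$ at $0$, keep $A$ inside the open arc, and let $f$ send $C_1$ to $\lambda_1=u^{-}\cdot\sigma\cdot u$ (where $\sigma$ traverses $S$) and all other circles to the constant at $p$; then $\lambda_1$ is reduced with uncountable $A$-preimage, while $\gamma_1\simeq\sigma$ has empty $A$-preimage, so the hypothesis again holds.

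Because of this, your final assembly step cannot close the argument. It is true (for instance by lifting to the $\bbr$-tree given by the generalized universal covering of $X$) that $\widehat{\delta}$ is a concatenation of restrictions of $\widehat{u}$, $\widehat{u}^{-}$, and the $\widehat{\lambda_n}$ together with one limit point; but since those restrictions may have uncountable $A$-preimage, countability of $\widehat{\delta}^{-1}(A)$ does not follow. What the hypothesis actually controls is not $\widehat{u}$ and $\widehat{\lambda_n}$ themselves, but only the portions of them that \emph{survive} cancellation in the reduced representatives of the $f_{\#}(c_n)$ (and of finite products $f_{\#}(c_{n_1}\cdots c_{n_k})$). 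A correct proof must show that every portion surviving in $\widehat{\delta}$ already survives in one of these reduced representatives, i.e., it must confront head-on the cancellation of $u$ against $u^{-}$ that your argument rules out by fiat. That tracking of surviving subpaths is the missing idea; without it, the argument proves nothing precisely in the cases where some $f_{\#}(c_n)$ are trivial or carry heavy cancellation.
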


\begin{lemma}\label{sctequalitylemma}
If $X$ is a one-dimensional metric space and $A\subseteq X$ is closed, then $\sct(F(X,A))=\sct(X,A)$.
\end{lemma}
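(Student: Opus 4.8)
The plan is to prove the two inclusions separately and combine them. One of them, $\sct(X,A)\leq \sct(F(X,A))$, has already been recorded in the discussion immediately preceding the statement, so the substance of the proof is the reverse inclusion $\sct(F(X,A))\leq \sct(X,A)$.

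For that reverse inclusion I would avoid reassembling loops by hand and instead lean on the closure machinery built in Sections 3 and 4. The starting observation is that a finite set is scattered, so $F(X,A)\leq \sct(X,A)$; monotonicity of $\sct$ (Lemma \ref{sctproperties}(2)) then gives $\sct(F(X,A))\leq \sct(\sct(X,A))$. Next, the general inequality $\sct(G)\leq \cl_{C,c_{\infty}}(G)$ — which follows from Theorem \ref{mainthm} together with Proposition \ref{inclusionprop}, and is exactly the inequality invoked in the proof of Corollary \ref{equality2cor} — yields $\sct(\sct(X,A))\leq \cl_{C,c_{\infty}}(\sct(X,A))$. Finally, because $X$ is one-dimensional metric and $A$ is closed, Proposition \ref{sctxaisccinfclosedprop} says $\sct(X,A)$ is $(C,c_{\infty})$-closed, so $\cl_{C,c_{\infty}}(\sct(X,A))=\sct(X,A)$ by Lemma \ref{closurepropertieslemma}. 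Chaining these,
\[\sct(F(X,A))\leq \sct(\sct(X,A))\leq \cl_{C,c_{\infty}}(\sct(X,A))=\sct(X,A),\]
and combining with the reverse containment from the preceding remark gives the asserted equality.

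The step I expect to carry the real weight is the passage $\sct(\sct(X,A))\leq \sct(X,A)$. Since $\sct$ is not idempotent in general (one should not expect $\sct(\sct(H))=\sct(H)$, as noted after Lemma \ref{sctproperties}), this containment is not formal: it holds precisely because $\sct(X,A)$ is already closed under the strictly stronger operator $\cl_{C,c_{\infty}}$ and iterating $\sct$ never escapes $\cl_{C,c_{\infty}}$. This is exactly where one-dimensionality enters, through Proposition \ref{sctxaisccinfclosedprop}. For comparison, a self-contained argument would take an $F(X,A)$-scattered representative $\alpha$ with set of cuts $K\subseteq \alpha^{-1}(x_0)$, replace $\alpha$ on each gap $(a,b)\in\mci(K)$ by the reduced representative of $\alpha|_{[a,b]}$ (whose image lies in $\alpha([a,b])$, forcing continuity of the reassembled loop at limit points of $K$), and then observe that the preimage of $A$ under the new loop is closed and countable — $K$ is countable, there are countably many gaps, and in each gap the reduced representative meets $A$ finitely — hence scattered. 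The delicate ingredient in that route, namely that the reduced representative of a class in $F(X,A)$ meets $A$ in a finite set, together with continuity of the assembled homotopy, is precisely the reduced-path content underlying Proposition \ref{sctxaisccinfclosedprop}, which is why routing through the closure operators is the cleaner path.
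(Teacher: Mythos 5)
Your proof is correct, but it takes a genuinely different route from the paper's. The paper establishes the nontrivial inclusion $\sct(F(X,A))\leq\sct(X,A)$ directly: given an $F(X,A)$-scattered loop $\alpha$ with cut set $K$, it replaces $\alpha$ on each gap $(c,d)\in\mci(K)$ by the reduced representative of $\alpha|_{[c,d]}$ (reduction in a one-dimensional metric space stays within the image, and reduced representatives are unique up to reparameterization), verifies continuity of the reassembled loop $\beta$ at points of $K$, and notes that $\beta^{-1}(A)$ is the union of the scattered set $K$ with at most finitely many isolated points per gap, hence scattered; this is exactly the ``self-contained argument'' you sketch in your closing paragraph. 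Your actual argument instead runs the formal chain $\sct(F(X,A))\leq\sct(\sct(X,A))\leq\cl_{C,c_{\infty}}(\sct(X,A))=\sct(X,A)$, and every link is sound: $F(X,A)\leq\sct(X,A)$ because finite sets are scattered, monotonicity is Lemma \ref{sctproperties}(2), the middle inequality is Theorem \ref{mainthm} combined with Proposition \ref{inclusionprop}, and the final equality is Proposition \ref{sctxaisccinfclosedprop} together with Lemma \ref{closurepropertieslemma}. There is no circularity, since Proposition \ref{sctxaisccinfclosedprop} is imported from \cite{BFTestMap} and all other ingredients precede the lemma in the paper, so the subsequent use of the lemma in Theorem \ref{onedtheorem} is unaffected. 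The trade-off: your route is shorter and pushes all the one-dimensional reduced-path work into the cited proposition, but it spends the paper's heaviest tool (the transfinite induction behind Theorem \ref{mainthm}) on a statement the author proves at the level of representatives; the paper's route is independent of both Theorem \ref{mainthm} and Proposition \ref{sctxaisccinfclosedprop}, and it yields the stronger, concrete conclusion that every class in $\sct(F(X,A))$ has a representative loop whose preimage of $A$ is literally scattered, rather than only the group-level equality.
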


\begin{proof}
The inclusion $\sct(X,A)\leq \sct(F(X,A))$ is clear. Suppose $1\neq [\alpha]\in \sct(F(X,A))$ for $F(X,A)$-scattered loop $\alpha$. Take $K\subseteq \alpha^{-1}(x_0)$ to be a set of $F(X,A)$-cuts for loop $\alpha:\ui\to X$. We define a new path $\beta:\ui\to X$ as follows: fix $(c,d)\in \mathcal{I}(K)$. Then $\alpha|_{[c,d]}$ is path-homotopic to a path $\gamma:[c,d]\to X$ such that $\gamma^{-1}(A)$ is finite. Since reduction of any path only occurs within the image of that path, the preimage of $A$ under the reduced representative of $\gamma$ is also finite. However, this reduced representative is unique to the homotopy class and is therefore also the reduced representative of $\alpha|_{[c,d]}$. Hence, we define $\beta$ so that $\beta|_K=\alpha|_{K}$ and $\beta|_{[c,d]}$ is the reduced representative of $\alpha|_{[c,d]}$ for all $(c,d)\in\mathcal{I}(K)$. Continuity of $\beta$ at a point $k\in K$ follows from the continuity of $\alpha$ at $k$ and the fact that $\beta([c,d])\subset \alpha([c,d])$ for all $(c,d)\in\mathcal{I}(K)$. Since, to construct $\beta$, we have only performed reductions of subpaths of $\alpha$, it is clear that $\alpha$ and $\beta$ have the same reduced representative and are therefore homotopic. Notice that $\beta^{-1}(A)$ is closed and is the union of $K$ and at most finitely many isolated points in each interval $(c,d)\in\mathcal{I}(K)$. Hence, since $K$ is scattered, $\beta^{-1}(A)$ is scattered. Therefore, $[\alpha]=[\beta]\in \sct(X,A)$.
\end{proof}

%
\begin{theorem}\label{onedtheorem}
If $X$ is a one-dimensional metric space and $A\subseteq X$ is closed, then $\cl_{C,c_{\infty}}(F(X,A))=\sct(X,A)$.
\end{theorem}

\begin{proof}
Together, Proposition \ref{sctxaisccinfclosedprop} and Lemma \ref{sctequalitylemma} give that $\sct(X,A)=\sct(F(X,A))$ is $(C,c_{\infty})$-closed. Applying Corollary \ref{ccinfcorollary} to $H=F(X,A)$ gives $\sct(F(X,A))=\cl_{C,c_{\infty}}(F(X,A))$. Hence, $\cl_{C,c_{\infty}}(F(X,A))=\sct(X,A)$.
\end{proof}

\begin{corollary}\label{agree}
$\cl_{C,c_{\infty}}(F)=\sct(F)=Sc$ and $\cl_{C,c_{\infty}}(C)=[\iota]Sc[\iota^{-}]$.
\end{corollary}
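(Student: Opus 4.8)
The plan is to apply Theorem \ref{onedtheorem} to two carefully chosen one-dimensional metric spaces and to translate between the subgroups appearing there and the concretely named subgroups $F$, $Sc$, $C$, and $\cl_{C,c_{\infty}}(C)$. The first statement concerns the Hawaiian earring $\bbh$, which is a one-dimensional metric space, and its closed subset $A=\{b_0\}$. With this choice, the definition of $F(X,A)$ gives $F(\bbh,\{b_0\})=\{[\alpha]\mid \alpha^{-1}(b_0)\text{ is finite}\}$, which is precisely the free subgroup $F=\lb a_n\mid n\in\bbn\rb$, since a loop meeting $b_0$ only finitely often is a finite concatenation of loops each traversing finitely many circles, hence a finite word in the $a_n$. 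Likewise $\sct(\bbh,\{b_0\})=Sc$ is the subgroup of scattered words by the definition recalled in the Example. Thus Theorem \ref{onedtheorem} directly yields $\cl_{C,c_{\infty}}(F)=\sct(\bbh,\{b_0\})=Sc$, and Lemma \ref{sctequalitylemma} gives the middle equality $\sct(F)=\sct(F(\bbh,\{b_0\}))=\sct(\bbh,\{b_0\})=Sc$.

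For the second statement I would work in the space $\bbhp=\bbh\cup([-1,0]\times\{0\})$ with whisker, again a one-dimensional metric space, and take $A=\{\bpp\}$, the closed singleton at the whisker's free end. Here $F(\bbhp,\{\bpp\})$ consists of classes of loops meeting $\bpp$ only finitely often; conjugating by the whisker-inclusion $[\iota]$ carries such loops to loops in $\bbh$ meeting $b_0$ finitely often, so $F(\bbhp,\{\bpp\})=[\iota]F[\iota^{-}]=C$. Theorem \ref{onedtheorem} then gives $\cl_{C,c_{\infty}}(C)=\sct(\bbhp,\{\bpp\})$, and the same conjugation identifies $\sct(\bbhp,\{\bpp\})$ with $[\iota]Sc[\iota^{-}]$: a loop in $\bbhp$ based at $\bpp$ has scattered preimage of $\bpp$ precisely when its ``core'' Hawaiian loop has scattered preimage of $b_0$, which is the defining condition for membership in $Sc$ after conjugation.

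The routine parts are the two identifications $F(\bbh,\{b_0\})=F$ and $F(\bbhp,\{\bpp\})=C$, together with the corresponding scattered identifications; these follow from unwinding definitions and the structure of reduced words in $\pioneh$. The one point requiring genuine care — and the most plausible obstacle — is confirming that the conjugation-by-whisker correspondence behaves correctly at the level of preimages of the basepoint, so that $\sct(\bbhp,\{\bpp\})$ really equals $[\iota]Sc[\iota^{-}]$ rather than some larger or smaller subgroup. The subtlety is that a reduced loop based at $\bpp$ may traverse the whisker several times, so its preimage of $\bpp$ includes the endpoints of those whisker-traversals in addition to the returns to $b_0$; one must check that incorporating these finitely- or scatteredly-many extra points neither destroys nor artificially creates the scattered property, which is exactly the kind of ``finitely many isolated points added inside each interval'' bookkeeping handled by Lemma \ref{sctequalitylemma} and the closure-under-finite-union of scattered sets. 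Once that correspondence is verified, both equalities follow immediately from Theorem \ref{onedtheorem}.
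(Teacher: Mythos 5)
Your first chain of equalities is correct and is exactly the paper's argument: one observes $F=F(\bbh,\{b_0\})$ and $Sc=\sct(\bbh,\{b_0\})$, so Theorem \ref{onedtheorem} gives $\cl_{C,c_{\infty}}(F)=Sc$, and Lemma \ref{sctequalitylemma} gives the middle equality $\sct(F)=\sct(F(\bbh,\{b_0\}))=\sct(\bbh,\{b_0\})=Sc$.

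The second half, however, contains a genuine error: the identification $F(\bbhp,\{\bpp\})=C$ is false, and the point you yourself flagged as ``requiring genuine care'' is precisely where it fails. The preimage of $\bpp$ sees only the whisker traversals of a loop and records nothing about what the loop does in the earring, where all of the infinite-product structure lives. Concretely, the loop $\iota\cdot\ell_{\infty}\cdot\iota^{-}$ meets $\bpp$ only at $\{0,1\}$, so $c_{\infty}=[\iota]a_{\infty}[\iota^{-}]\in F(\bbhp,\{\bpp\})$, yet $c_{\infty}\notin C=[\iota]F[\iota^{-}]$ since $a_{\infty}\notin F$. In fact $F(\bbhp,\{\bpp\})=\sct(\bbhp,\{\bpp\})=\pi_1(\bbhp,\bpp)$: every non-constant class has a reduced representative, and a reduced loop based at $\bpp$ can meet $\bpp$ only at its endpoints, because an interior visit to the free end of the whisker forces a null-homotopic backtrack inside the whisker arc, contradicting reducedness. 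So with your choice of $A$, Theorem \ref{onedtheorem} yields only the vacuous statement $\cl_{C,c_{\infty}}(\pi_1(\bbhp,\bpp))=\pi_1(\bbhp,\bpp)$, and Lemma \ref{sctequalitylemma} cannot repair this, since it compares $\sct(F(X,A))$ with $\sct(X,A)$ for one fixed $A$ rather than relating preimages of different points. The repair is to keep the basepoint $\bpp$ but take $A=\{b_0\}\subseteq\bbhp$ (nothing in the definitions of $F(X,A)$ and $\sct(X,A)$ requires $A$ to contain the basepoint); this is exactly what the paper does. Then $C=F(\bbhp,\{b_0\})$ --- a loop at $\bpp$ meeting $b_0$ finitely often decomposes into two whisker arcs and finitely many loops each lying in a single circle $C_n$ --- and $\sct(\bbhp,\{b_0\})=[\iota]\sct(\bbh,\{b_0\})[\iota^{-}]=[\iota]Sc[\iota^{-}]$, after which Theorem \ref{onedtheorem} gives $\cl_{C,c_{\infty}}(C)=[\iota]Sc[\iota^{-}]$ as in your outline.
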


\begin{proof}
Observe that $F=F(\bbh,\{b_0\})$, $Sc=\sct(\bbh,\{b_0\})$, $C=F(\bbhp,\{b_0\})\leq \pi_1(\bbhp,\bpp)$, and $\sct(\bbhp,\{b_0\})=[\iota]\sct(\bbh,\{b_0\})[\iota^{-}]=[\iota]Sc[\iota^{-}]$. Applying Theorem \ref{onedtheorem} gives both equalities.
\end{proof}

We complete this section by proving the equivalence (1) $\Leftrightarrow$ (2) $\Leftrightarrow$ (5) in Theorem \ref{overallthm}. Let $\ell_{\geq m}$ denote the infinite concatenation $\prod_{n=m}^{\infty}\ell_n$ for $m\in\bbn$.

\begin{theorem}\label{groupthm}
For any space $X$, the following are equivalent:
\begin{enumerate}
\item the trivial subgroup of $\pionex$ is $(C,c_{\infty})$-closed,
\item for any maps $f,g:(\bbh,b_0)\to (X,x)$ such that $f_{\#}(a_n)=g_{\#}(a_n)$ for all $n\in\bbn$, we have $f_{\#}(a_{\infty})=g_{\#}(a_{\infty})$,
\item for any maps $f,g:(\bbh,b_0)\to (X,x)$ such that $f_{\#}(a_n)=g_{\#}(a_n)$ for all $n\in\bbn$, we have $f_{\#}|_{Sc}=g_{\#}|_{Sc}$, i.e. $f_{\#}$ and $g_{\#}$ agree on the subgroup of scattered words.
\end{enumerate}
If $X$ is first countable, these are also equivalent to the homotopically Hausdorff property.
\end{theorem}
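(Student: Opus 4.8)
The plan is to establish the three equivalences through the cycle of easy links $(3)\Rightarrow(2)\Rightarrow(1)$ together with the two substantive upgrades $(1)\Rightarrow(2)$ and $(2)\Rightarrow(3)$, and to treat the homotopically Hausdorff clause separately at the end. The link $(3)\Rightarrow(2)$ is immediate because $a_{\infty}\in Sc$: the loop $\ell_{\infty}$ satisfies $\ell_{\infty}^{-1}(b_0)=\{1\}\cup\{\tfrac{n-1}{n}\mid n\in\bbn\}$, a closed scattered set. For $(2)\Rightarrow(1)$ I would use the whisker device: given $\phi:(\bbhp,\bpp)\to\pionex$ with $\phi_{\#}(c_n)=1$ for all $n$, set $\gamma=\phi\circ\iota$ and $h=\phi|_{\bbh}:(\bbh,b_0)\to(X,\phi(b_0))$, so that $\phi_{\#}(c_n)=[\gamma]h_{\#}(a_n)[\gamma]^{-1}=1$ forces $h_{\#}(a_n)=1$; applying $(2)$ to $h$ and the constant map at $\phi(b_0)$ gives $h_{\#}(a_{\infty})=1$, whence $\phi_{\#}(c_{\infty})=[\gamma]h_{\#}(a_{\infty})[\gamma]^{-1}=1$. (This argument also shows that property $(1)$ does not depend on the choice of basepoint.)

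For $(2)\Rightarrow(3)$ I would run a rank induction mirroring the proof of Theorem \ref{mainthm}. Fix $f,g$ agreeing on every $a_n$ and let $E=\{w\in\pioneh\mid f_{\#}(w)=g_{\#}(w)\}$, a subgroup containing $F$. Since $Sc=\sct(F)$ by Corollary \ref{agree}, each element of $Sc$ is represented by an $F$-scattered loop $\alpha$ in $\bbh$ with $F$-cut set $S_0$, every piece $\alpha|_{[c,d]}$ lying in $F\le E$. I prove $[\alpha]\in E$ by induction on $rk(\alpha,S_0)$. The successor and limit steps reduce, exactly as in Theorem \ref{mainthm}, to combining pieces of strictly smaller rank indexed by order type $\mathbf{n}$, $\omega$, $\omega^{\ast}$, or $\zeta$. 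In the $\omega$ case I form auxiliary maps $\hat f,\hat g:(\bbh,b_0)\to(X,x)$ with $\hat f\circ\ell_i\equiv f\circ\alpha|_{[c_i,d_i]}$ and $\hat g\circ\ell_i\equiv g\circ\alpha|_{[c_i,d_i]}$, which are continuous because the piece-images shrink to $x$; the inductive hypothesis gives $\hat f_{\#}(a_i)=\hat g_{\#}(a_i)$, and $(2)$ then yields $\hat f_{\#}(a_{\infty})=\hat g_{\#}(a_{\infty})$, i.e. $f_{\#}[\alpha]=g_{\#}[\alpha]$. The $\omega^{\ast}$ and $\zeta$ cases follow by reversing and splitting as in Lemma \ref{ordertypelemma}. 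This gives $Sc\le E$, which is $(3)$.

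The crux is $(1)\Rightarrow(2)$, and I expect it to be the main obstacle. Fix $f,g:(\bbh,b_0)\to(X,x)$ agreeing on all $a_n$; then the finite partial products agree and the tail-difference loops $\rho_n=(f\circ\ell_{\geq n})\,(g\circ\ell_{\geq n})^{-}$ lie in arbitrarily small neighborhoods of $x$ and satisfy the recursion $\rho_n\equiv(f\circ\ell_n)\,\rho_{n+1}\,(g\circ\ell_n)^{-}$. The goal is to realize the difference $f_{\#}(a_{\infty})g_{\#}(a_{\infty})^{-1}=[\rho_1]$ as $\Psi_{\#}(c_{\infty})$ for a single $\Psi:(\bbhp,\bpp)\to(X,x)$ with $\Psi_{\#}(c_n)=1$, after which $(1)$ forces $f_{\#}(a_{\infty})=g_{\#}(a_{\infty})$. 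I would organize this conceptually by writing $E=(f,g)_{\#}^{-1}(\Delta)$, where $\Delta\le\pi_1(X\times X,(x,x))$ is the diagonal; since the preimage of a $(C,c_{\infty})$-closed subgroup under an induced homomorphism is again $(C,c_{\infty})$-closed (a one-line check from Definition \ref{testmap}), it suffices to prove that $\Delta$ is $(C,c_{\infty})$-closed, after which $Sc=\cl_{C,c_{\infty}}(F)\le\cl_{C,c_{\infty}}(E)=E$ delivers both $(2)$ and $(3)$. The genuine difficulty is this closedness: the naive ``interleaved'' combination $\Psi\circ\ell_n\equiv(f\circ\ell_n)(g\circ\ell_n)^{-}$ only proves that a \emph{rearranged} product of trivial loops vanishes, not the difference itself, so the construction of $\Psi$ must instead exploit the recursion on the $\rho_n$ together with uniqueness of reduced representatives to present $[\rho_1]$ as a genuinely shrinking $c_{\infty}$ with null factors.

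Finally, for first countable $X$ I would prove $(1)\Leftrightarrow$ homotopically Hausdorff via the relative characterization that $H\le\pionex$ is $(C,c_{\infty})$-closed if and only if $X$ is homotopically Hausdorff relative to $H$, specialized to $H=1$. The implication homotopically Hausdorff $\Rightarrow(1)$ needs no countability hypothesis: for $\Psi$ killing every $c_n$, the relation $c_{\infty}=c_1\cdots c_{n-1}\cdot c_{\geq n}$ shows $\Psi_{\#}(c_{\infty})=\Psi_{\#}(c_{\geq n})$ for all $n$, so $\Psi_{\#}(c_{\infty})$ has representatives in arbitrarily small neighborhoods of $x_0$ and is therefore trivial. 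The converse is where first countability enters: from a nontrivial element possessing a representative loop in each member of a countable neighborhood basis $\{U_n\}$ of $x$, I would concatenate these representatives into a single map $(\bbhp,\bpp)\to(X,x)$ whose circles realize null generators while $c_{\infty}$ realizes the element, contradicting $(1)$.
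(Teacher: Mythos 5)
Most of your cycle is sound and close to the paper's own route: $(3)\Rightarrow(2)$ is immediate, your $(2)\Rightarrow(1)$ whisker argument is correct (the paper proves $(3)\Rightarrow(1)$ the same way, using the restriction to $\bbh$ against a constant map), and your scattered-rank induction for $(2)\Rightarrow(3)$ works; the paper instead inducts on the $(F,a_{\infty})$-rank in the inductive construction of $\cl_{F,a_{\infty}}(F)$, but your version is a legitimate variant --- indeed it can be compressed: property (2), applied to pairs $f\circ h$, $g\circ h$ for maps $h:(\bbh,b_0)\to(\bbh,b_0)$, says precisely that the equalizer subgroup $E$ is $(F,a_{\infty})$-closed, whence $Sc=\sct(F)\leq \cl_{F,a_{\infty}}(F)\leq E$ by Theorem \ref{mainthm} and Corollary \ref{agree}, with no new induction needed. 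The first-countable clause also matches the paper, up to a basepoint slip: the small representatives you produce live near $x=\Psi(b_0)$, not near $x_0$; one must apply the homotopically Hausdorff property at $x$ to the element $[\Psi\circ\ell_{\infty}]\in\pi_1(X,x)$ rather than to $\Psi_{\#}(c_{\infty})$ itself.

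The genuine gap is exactly where you flagged it: $(1)\Rightarrow(2)$ is never proved, and your setup as written cannot be completed. With your tail loops $\rho_n=(f\circ\ell_{\geq n})\cdot(g\circ\ell_{\geq n})^{-}$, agreement on each $a_n$ yields only the conjugation recursion $[\rho_n]=f_{\#}(a_n)[\rho_{n+1}]f_{\#}(a_n)^{-1}$, so the telescoping factors $\rho_n\cdot\rho_{n+1}^{-}$ represent commutators and need not be null-homotopic; they cannot serve as the circle images of $\Psi$. The missing idea is to associate the tail difference the other way: set $\gamma_n=(f\circ\ell_{\geq n})^{-}\cdot(g\circ\ell_{\geq n})$. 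Then $\gamma_n\equiv(f\circ\ell_{\geq n+1})^{-}\cdot\bigl((f\circ\ell_n)^{-}\cdot(g\circ\ell_n)\bigr)\cdot(g\circ\ell_{\geq n+1})$ is genuinely path-homotopic to $\gamma_{n+1}$, because the middle loop represents $f_{\#}(a_n)^{-1}g_{\#}(a_n)=1$. Now define $k:(\bbhp,\bpp)\to(X,x_0)$ by $k\circ\iota=\alpha$ (any path from $x_0$ to $x$) and $k\circ\ell_n=\gamma_n\cdot\gamma_{n+1}^{-}$: continuity holds since the $\gamma_n$ lie in the shrinking images of the tails, each $k_{\#}(c_n)=1$, and an infinite swindle telescopes $k_{\#}(c_{\infty})$ to $[\alpha][\gamma_1][\alpha^{-}]$, so (1) forces $[\gamma_1]=f_{\#}(a_{\infty})^{-1}g_{\#}(a_{\infty})=1$. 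Two further cautions: ``uniqueness of reduced representatives'' is unavailable here, since reduced representatives exist only in one-dimensional metric spaces, not in an arbitrary $X$; and your diagonal reformulation is not a reduction --- unwinding Definition \ref{testmap}, the statement that $\Delta\leq\pi_1(X\times X,(x,x))$ is $(C,c_{\infty})$-closed is verbatim the two-map statement you are trying to prove.
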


\begin{proof}
(1) $\Rightarrow$ (2) Suppose $1\leq \pionex$ is $(C,c_{\infty})$-closed and that $f,g:(\bbh,b_0)\to (X,x)$ are maps satisfying $f_{\#}(a_n)=g_{\#}(a_n)$ for all $n\in\bbn$. Define $\gamma_n=(f\circ\ell_{\geq n})^{-}\cdot (g\circ\ell_{\geq n})$, $n\in\bbn$. For all $n$, we have $[(f\circ\ell_n)^{-}\cdot (g\circ \ell_n)]=1$ and, thus, $\gamma_n\simeq \gamma_{n+1}$. Let $\alpha:\ui\to X$ be any path from $x_0$ to $x$ and define $k:(\bbhp,\bpp)\to (X,x_0)$ by $k\circ\iota=\alpha$ and $k\circ\ell_n=\gamma_{n}\cdot\gamma_{n+1}^{-}$. Then $k_{\#}(C)=1$ and a swindle yields $k_{\#}(c_{\infty})=[\alpha][\gamma_1][\alpha^{-}]$. Since the trivial subgroup is assumed to be $(C,c_{\infty})$-closed, we must have $[\gamma_1]=f_{\#}(a_{\infty})^{-1}g_{\#}(a_{\infty})=1$ in $\pi_1(X,x)$. Thus $f_{\#}(a_{\infty})=g_{\#}(a_{\infty})$.

(2) $\Rightarrow$ (3) Suppose (2) holds and $f,g:(\bbh,b_0)\to (X,x)$ are maps satisfying $f_{\#}|_{F}=g_{\#}|_{F}$. By Corollary \ref{agree}, we have $Sc=\cl_{C,c_{\infty}}(F)$ so we show that $f_{\#}$ and $g_{\#}$ agree on every element of $\cl_{C,c_{\infty}}(F)$. We proceed by induction on the $(F,a_{\infty})$-rank of the elements of $\cl_{F,a_{\infty}}(F)$. By assumption, $f_{\#}$ and $g_{\#}$ agree on the elements of $\cl_{F,a_{\infty}}(F)_{\mathbf{0}}=F$. Suppose $f_{\#}(a)=g_{\#}(a)$ for all elements $a\in\cl_{F,a_{\infty}}(F)$ with $(F,a_{\infty})$-rank $<\kappa$. Suppose $a\in \cl_{F,a_{\infty}}(F)$ has $(F,a_{\infty})$-rank $\kappa$. Since $\kappa$ must necessarily be a successor ordinal, we have $\kappa=\lambda+1$ for some ordinal $\lambda$. Then $a=\prod_{i=1}^{m}b_i$ where either $b_i$ has $(F,a_{\infty})$-rank $\leq \lambda$ or $b_i=k_{\#}(a_{\infty})$ for $k:(\bbh,b_0)\to(\bbh,b_0)$ with $k_{\#}(F)\leq \cl_{F,a_{\infty}}(F)_{\lambda}$. In the first case, $f_{\#}$ and $g_{\#}$ agree on $b_i$ by the induction hypothesis so we focus on the case where $b_i=k_{\#}(a_{\infty})$. For each $n\in\bbn$, we have $k_{\#}(a_n)\in \cl_{F,a_{\infty}}(F)_{\lambda}$ and, thus, $f_{\#}(k_{\#}(a_n))=g_{\#}(k_{\#}(a_n))$ by the induction hypothesis. Since (2) holds and $f\circ k$, $g\circ k$ are maps $\bbh\to\bbh$ whose induced homomorphisms agree on $F$, we have $f_{\#}(b_i)=f_{\#}(k_{\#}(a_{\infty}))=g_{\#}(k_{\#}(a_{\infty}))=g_{\#}(b_i)$. Since $f_{\#}$ and $g_{\#}$ agree on the factors of $a$, we have $f_{\#}(a)=g_{\#}(a)$, completing the induction.

(3) $\Rightarrow$ (1) Suppose $1\leq \pionex$ is not $(C,c_{\infty})$-closed. Then there exists, $k:(\bbhp,\bpp)\to (X,x_0)$ such that $k_{\#}(c_n)=1$ for all $n\in\bbn$ and $k_{\#}(c_{\infty})\neq 1$. Let $x=k(b_0)$, $f=k|_{\bbh}$, and $g:\bbh\to X$ be the constant map at $x$. Then $f_{\#}(a_n)=1=g_{\#}(a_n)$ in $ \pi_1(X,x)$ for all $n$. Using the basepoint change isomorphism $\phi:\pi_1(X,x_0)\to \pi_1(X,x)$, $\phi([\alpha])=[(k\circ\iota)^{-}\cdot \alpha\cdot(k\circ\iota)]$, we see that $f_{\#}(a_{\infty})=\phi(k_{\#}(c_{\infty}))\neq 1$. Hence, $f_{\#}$ and $g_{\#}$ do not agree on $a_{\infty}\in \cl_{F,a_{\infty}}(F)=Sc$.

The last statement of the theorem is a special case of Theorem 3.6 in \cite{BFTestMap}. We include a proof for the sake of completeness.

(hom. Hausdorff) $\Rightarrow$ (1) If $1\leq \pionex$ is not $(C,c_{\infty})$-closed, then there exists a map $f:(\bbhp,\bpp)\to (X,x_0)$ such that $f_{\#}(C)=1$ and $f_{\#}(c_{\infty})\neq 1$. Setting $\alpha=f\circ\iota$, we have $f_{\#}(a_n)=[\alpha^{-}]f_{\#}(c_{n})[\alpha]=1$ for all $n\in\bbn$ and $g=f_{\#}(a_{\infty})\neq 1$. Let $U$ be any open neighborhood of $x=f(b_0)$. By the conintuity of $f$, there exists an $m\in\bbn$ such that $\delta=f\circ\ell_{\geq m}$ has image in $U$. Then $g=f_{\#}(a_1a_2\dots a_{m-1})[\delta]=[\delta]$, showing that $X$ is not homotopically Hausdorff.

(1) and $X$ first countable $\Rightarrow$ (hom. Hausdorff) Suppose $X$ is first countable and is not homotopically Hausdorff. Then there exists $1\neq g\in\pionex$ and a path $\alpha$ starting at $x_0$ such that for every neighborhood $U$ of $x=\alpha(1)$, there exists a loop $\delta$ in $U$ based at $x$ such that $g=[\delta]$. Let $U_1\supseteq U_2\supseteq U_3\supseteq\cdots$ be a neighborhood base at $x$. By assumption, there exists loops $\delta_n$ in $U_n$ based at $x$ such that $g=[\delta_n]$. Define $f:\bbhp\to X$ by $f\circ\iota=\alpha$ and $f\circ\ell_n= \delta_n\cdot\delta_{n+1}^{-}$. Since the basis $\{U_n\mid n\in\bbn\}$ is nested, $f$ is continuous at $b_0$. Note that $f_{\#}(c_n)=[\alpha\cdot \delta_{n}\cdot\alpha^{-}][\alpha\cdot \delta_{n+1}\cdot\alpha^{-}]^{-1}=1$ for all $n\in\bbn$ and, thus, $f_{\#}(C)=1$. However, applying a swindle gives $f_{\#}(c_{\infty})=[\alpha\cdot \delta_{1}]\left[\prod_{n= 2}^{\infty}(\delta_{n}^{-}\cdot\delta_{n})\right][\alpha^{-}]=[\alpha\cdot \delta_{1}\cdot\alpha^{-}]\neq 1$. We conclude that the trivial subgroup $1\leq \pionex$ is not $(C,c_{\infty})$-closed.
\end{proof}

\begin{example}\label{archipelagoexample}
Let $\mathbb{HA}\subseteq \mathbb{R}^3$ be the harmonic archipelago space \cite{CHM} where $\bbr^2\times \{0\}\cap \mathbb{HA}=\mathbb{H}\times \{0\}$. Then infinite $\pi_1$-products are not well-defined in $\pi_1(\mathbb{HA},b_0)$. Note that $1\neq [\ell_1]=[\ell_2]=\cdots $ in $\pi_1(\mathbb{HA},b_0)$ and define $\gamma_n=\ell_{n}\cdot \ell_{n+1}^{-}$. If $f:\bbh\to \mathbb{HA}$ is the constant map at $b_0$ and $g:\bbh\to \mathbb{HA}$ satisfies $g\circ \ell_n=\ell_{n}\cdot\ell_{n+1}^{-}$, then we have $f_{\#}(a_n)=g_{\#}(a_n)$ for all $n\in\bbn$. However, $f_{\#}(a_{\infty})=1\neq [\ell_1]= g_{\#}(a_{\infty})$. Hence, $\prod_{n=1}^{\infty}[\gamma_n]$ does not have a well-defined meaning in $\pi_1(\mathbb{HA},b_0)$.
\end{example}

In general, if a space fails to have the homotopically Hausdorff property, one is guaranteed the existence of a net of loops based at a point $x$, within a single non-trivial homotopy class, that converges to the constant loop at $x$ in the compact-open topology. However, to form topologically represented infinite products in $\pi_1$, one must have a \textit{sequence} of such loops. Without assuming first countability, it may not be possible to replace a net with a cofinal subsequence. The following higher-ordinal analogue of the harmonic archipelago illustrates why it is necessary to assume first countability if we wish to include the homotopically Hausdorff property in the statements of Theorem \ref{overallthm} and Theorem \ref{groupthm}. 

\begin{example}\label{omegaonesuspension}
Let $X$ be the reduced suspension of the first uncountable compact ordinal $\omega_1+1=\omega_1\cup\{\omega_1\}$ with canonical basepoint $x_0$. For $\lambda<\omega_1$, let $C_{\lambda}$ be the circle, which is the image of $\{\lambda\}\times\ui$ in $X$, and let $\gamma_{\lambda}$ be the canonical loop traversing $C_{\lambda}$. Since $C_{\lambda}$ is a retract of $X$, each $\gamma_{\lambda}$ is homotopically non-trivial. Let $Y$ be the space obtained by attaching a $2$-cell to $X$ along the loop $\gamma_{\mu}\cdot\gamma_{\lambda}^{-}$ for all ordered pairs $\mu<\lambda<\omega_1$. Since no sequence of countable ordinals converges to $\omega_1$, there is no sequentially compact subspace of $X$ that contains $C_{\lambda}$ for infinitely many $\lambda$. Therefore, infinite concatenations of sequences of non-trivial loops can be formed in neither $X$ nor $Y$. It follows that $\pi_1(X,x_0)$ is free on the uncountable set $\{[\gamma_{\lambda}]\mid \lambda<\omega_1\}$, the group $\pi_1(Y,y_0)$ is infinite cyclic (generated by the homotopy class containing $\{\gamma_{\lambda}\mid \lambda<\omega_1\}$), and the trivial subgroup is $(C,c_{\infty})$-closed in both $\pionex$ and $\pi_1(Y,x_0)$. However, since every neighborhood of $x_0$ in $Y$ contains $C_{\lambda}$ for some $\lambda<\omega_1$, $Y$ is not homotopically Hausdorff.
\end{example}

\section{Scattered products in fundamental groupoids}

We now introduce a set of closure pairs, which will be useful for transitioning from infinite products in fundamental groups to infinite products in fundamental groupoids.

Let $A$ be a closed proper subset of $\ui$ containing $\{0,1\}$. For each $I=(a,b)\in\mci(A)$, let $C_I=\left\{(x,y)\in\bbr^2\mid y\geq 0,\left(x-\frac{a+b}{2}\right)^2+y^2=\left(\frac{b-a}{2}\right)^2\right\}$ be the semicircle whose boundary is $\{(a,0),(b,0)\}$. Let $\bbw_{A}=B\cup \bigcup_{I\in\mci(A)}C_I$ with basepoint $w_0=(0,0)$ where $B=[0,1]\times\{0\}$ is the \textit{base-arc}.

\begin{figure}[H]
\centering \includegraphics[height=1.1in]{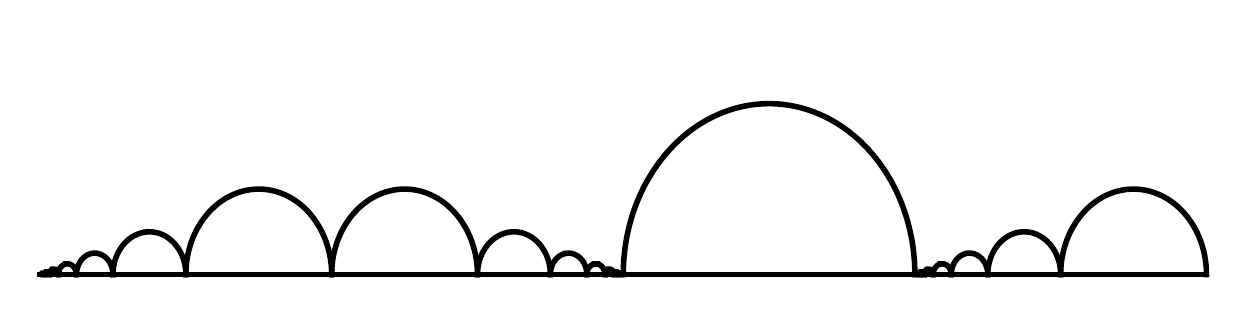}
\caption{\label{waspace}An example of the space $\bbw_A$ where $A$ is scattered and $\mci(A)$ has order type $\zeta+\mathbf{1}+\omega^{\ast}$.}
\end{figure}

For $I=(a,b)\in\mci(A)$, let $\lambda_I:\ui\to B$ be the path $\lambda_I(t)=(bt+a(1-t),0)$ and $\upsilon_{I}:\ui\to C_I$ be the path so that if $r:\bbw_A\to B$ is the projection onto the x-axis, then $r\circ\upsilon_{I}=\lambda_I$. Let $\lambda(t)=(t,0)$ and $\upsilon:\ui\to (A\times \{0\})\cup\bigcup_{I\in\mci(A)}C_I$ be the path such that $r\circ\upsilon=\lambda$. Hence, $\lambda$ and $\upsilon$ are paths in $\bbw_A$ from $w_0$ to $(1,0)$ such that $\lambda$ is the path along the base-arc and $\upsilon$ is the path through the circles $C_I$.

Let $W_A$ be the subgroup of $\pi_1(\bbw_{A},w_0)$ generated by the elements $w_I=[\upsilon_{[0,b]}\cdot \lambda_{[a,b]}^{-}\cdot \upsilon_{[0,a]}^{-}]$ for $I=(a,b)\in\mci(A)$ and let $w_{A,\infty}=[\upsilon\cdot\lambda^{-}]$. We consider the closure pair $(W_A,w_{A,\infty})$ for the test space $(\bbw_{A},w_0)$.

\begin{remark}
If $A$ is finite, then $\cl_{W_A,w_{A,\infty}}$ is the identity closure operator. If $\mcc$ is the ternary Cantor set, then the operator $\cl_{W_{\mcc},w_{\mcc,\infty}}$ is the closure pair used in \cite[Section 7]{BFTestMap} to characterize the well-definedness of transfinite $\Pi_1$-products.
\end{remark}

\begin{lemma}\label{omegapathlemma}
Let $A=\{\frac{n-1}{n}\mid n\in\bbn\}\cup\{1\}$. Then for any subgroup $H\leq \pionex$, $\cl_{C,c_{\infty}}(H)=\cl_{W_A,w_{A,\infty}}(H)$.
\end{lemma}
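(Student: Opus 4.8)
The plan is to prove the equality by establishing the two inclusions separately, each via a based map between the test spaces $\bbhp$ and $\bbw_A$ that transports the generating data of one closure pair onto the other, and then invoking the mechanism behind Proposition \ref{inclusionprop}. I will use the following principle explicitly: if $q:(\bbt',t_0')\to(\bbt,t_0)$ is a based map with $q_\#(T')\leq T$ and $q_\#(g')=g$, then every $(T',g')$-closed subgroup of $\pionex$ is automatically $(T,g)$-closed, whence $\cl_{T,g}(H)\leq\cl_{T',g'}(H)$ for all $H$. The justification is a one-line chase: given a probe $f:(\bbt,t_0)\to(X,x_0)$ with $f_\#(T)\leq H$, the composite $f\circ q$ satisfies $(f\circ q)_\#(T')\leq f_\#(T)\leq H$, so $(f\circ q)_\#(g')\in H$, i.e. $f_\#(g)=f_\#(q_\#(g'))\in H$. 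Thus it suffices to build two maps: $q:(\bbhp,\bpp)\to(\bbw_A,w_0)$ with $q_\#(C)\leq W_A$ and $q_\#(c_\infty)=w_{A,\infty}$, giving $\cl_{W_A,w_{A,\infty}}(H)\leq\cl_{C,c_\infty}(H)$; and $p:(\bbw_A,w_0)\to(\bbhp,\bpp)$ with $p_\#(W_A)\leq C$ and $p_\#(w_{A,\infty})=c_\infty$, giving the reverse inclusion.

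Write $A=\{t_n=\tfrac{n-1}{n}\mid n\in\bbn\}\cup\{1\}$, so that $\mci(A)=\{I_n=(t_n,t_{n+1})\}$ has order type $\omega$ and the intervals are adjacent with $t_1=0$. For $q$, I would set $q\circ\iota=\lambda$, and for each $n$ let $\eta_n=\lambda_{[t_n,1]}^{-}\cdot\upsilon_{[t_n,1]}$ be the ``tail'' loop at $(1,0)=q(b_0)$, defining $q\circ\ell_n=\eta_n\cdot\eta_{n+1}^{-}$. Since $\eta_n$ is supported on $r^{-1}([t_n,1])$, whose diameter tends to $0$ as $t_n\to 1$, the loops $q\circ\ell_n$ shrink to $(1,0)$ and $q$ is continuous. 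A swindle gives $q_\#(c_\infty)=[\lambda][\eta_1][\lambda^{-}]=[\lambda\cdot\lambda^{-}\cdot\upsilon\cdot\lambda^{-}]=w_{A,\infty}$. For the generator condition I introduce $\sigma_n=[\upsilon_{[0,t_n]}\cdot\lambda_{[0,t_n]}^{-}]$; a direct cancellation yields the telescoping identity $\sigma_{n+1}=w_{I_n}\sigma_n$ with $\sigma_1=1$, so each $\sigma_n\in W_A$, and a second cancellation shows $[\lambda\cdot\eta_n\cdot\lambda^{-}]=\sigma_n^{-1}w_{A,\infty}$. Hence $q_\#(c_n)=[\lambda\eta_n\lambda^{-}][\lambda\eta_{n+1}\lambda^{-}]^{-1}=\sigma_n^{-1}\sigma_{n+1}=\sigma_n^{-1}w_{I_n}\sigma_n\in W_A$, so $q_\#(C)\leq W_A$.

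For $p$ I would exploit the same $\omega$-structure from the other side: let $p\circ\lambda$ trace the whisker $\iota$ on $[0,\tfrac12]$ and be constant at $b_0$ on $[\tfrac12,1]$, and let $p\circ\upsilon$ trace $\iota\cdot\ell_1$ on $[0,\tfrac12]$ and $\ell_n$ on each $\bar I_n$ for $n\geq 2$; these agree on $A$ and, since the earring loops $\ell_n$ shrink to $b_0=p(1,0)$, the images of the semicircles $C_{I_n}$ shrink to $b_0$ and $p$ is continuous. Then $p\circ\upsilon\simeq\iota\cdot\ell_\infty$ while $p\circ\lambda\simeq\iota$, so $p_\#(w_{A,\infty})=[\iota\cdot\ell_\infty\cdot\iota^{-}]=[\iota]a_\infty[\iota^{-}]=c_\infty$; and evaluating $p_\#$ on each generator $w_{I_n}=[\upsilon_{[0,t_{n+1}]}\lambda_{I_n}^{-}\upsilon_{[0,t_n]}^{-}]$ produces $[\iota](a_1\cdots a_{n-1})a_n(a_1\cdots a_{n-1})^{-1}[\iota^{-}]\in[\iota]F[\iota^{-}]=C$, so $p_\#(W_A)\leq C$. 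Applying the transport principle to $q$ and then to $p$ yields the two inclusions and hence $\cl_{C,c_\infty}(H)=\cl_{W_A,w_{A,\infty}}(H)$.

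The routine-looking but genuinely load-bearing steps are the two generator containments $q_\#(C)\leq W_A$ and $p_\#(W_A)\leq C$: these are exactly where the telescoping identity $\sigma_{n+1}=w_{I_n}\sigma_n$ (and its $\bbhp$-analogue) does the real work, rewriting the ``difference of consecutive tails'' loops $\eta_n\cdot\eta_{n+1}^{-}$ as honest elements of the finitely generated subgroup. The other delicate point, and the reason the statement is tied to this particular $A$, is continuity of $p$ and $q$ at the single wild point ($(1,0)$ in $\bbw_A$, $b_0$ in $\bbhp$): it rests on the images of the $n$-th factor shrinking to that point, which uses precisely the order type $\omega$ of $\mci(A)$ and the accumulation of the circles there.
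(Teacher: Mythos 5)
Your proposal is correct and takes essentially the same approach as the paper: both inclusions are obtained from explicit based maps between $(\bbhp,\bpp)$ and $(\bbw_A,w_0)$ carrying one closure pair into the other, combined with the comparison mechanism (the paper invokes Remark \ref{comparisonremark}, while you re-derive the equivalent transport principle inline), and with the same swindle/shrinking-image arguments for continuity and the infinite concatenation. The only differences are cosmetic: your maps are mirror-variants of the paper's (the paper sends the whisker to $\upsilon$ rather than $\lambda$ and sends the generators $c_n$, $w_n$ exactly to inverses of generators, so it never needs your telescoping identity $\sigma_{n+1}=w_{I_n}\sigma_n$ or the conjugates $\sigma_n^{-1}w_{I_n}\sigma_n$), but the structure and substance of the verification are the same.
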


\begin{proof}
By Remark \ref{comparisonremark}, it suffices to show $w_{A,\infty}\in \cl_{C,c_{\infty}}(W_A)$ and $c_{\infty}\in \cl_{W_A,w_{A,\infty}}(C)$. Let $t_n=\frac{n-1}{n}$ so that the components of $\ui\backslash A$ are $I_n=\left(t_n,t_{n+1}\right)$, $n\in\bbn$. Set $\upsilon_n:=\upsilon_{I_n}$, $\lambda_n:=\lambda_{I_n}$, and $w_n:=w_{I_n}$. Let $\upsilon_{\geq n}=\prod_{k=n}^{\infty}\upsilon_k$ and $\lambda_{\geq n}(t)=\lambda\left(t+(1-t)t_n\right)$ be the path along the base-arc from $\left(t_n,0\right)$ to $(1,0)$.

Define $f:(\bbhp,\bpp)\to (\bbw_A,w_0)$ so that $f\circ\iota=\upsilon$ and $f\circ \ell_{n}\equiv \upsilon_{\geq n}^{-}\cdot \lambda_n\cdot\upsilon_{\geq n+1}$. Then $f_{\#}(c_n)=[\upsilon\cdot\upsilon_{\geq n}^{-}\cdot \lambda_n\cdot\upsilon_{\geq n+1}\cdot \upsilon^{-}]=[\upsilon_{[0,t_n]}\cdot \lambda_n\cdot \upsilon_{[0,t_{n+1}]}^{-}]=(w_n)^{-1}\in W_A$ and $f_{\#}(c_{\infty})=\left[\upsilon\cdot\left( \prod_{n=1}^{\infty}\upsilon_{\geq n}^{-}\cdot \lambda_n\cdot\upsilon_{\geq n+1}\right)\cdot \upsilon^{-}\right]=[\lambda\cdot\upsilon^{-}]=w_{A,\infty}^{-1}$. This implies $w_{A,\infty}^{-1}\in \cl_{C,c_{\infty}}(W_A)$, hence $w_{A,\infty}\in\cl_{C,c_{\infty}}(W_A)$.

Define $g:(\bbw_A,w_0)\to (\bbhp,\bpp)$ so that $g\circ\upsilon_1\equiv g\circ\lambda_1\equiv \iota$ and $g\circ \upsilon_{n}$ is constant at $b_0$ for $n\geq 2$ and $g\circ \lambda_n=\ell_{n-1}$ for $n\geq 2$. Then $g_{\#}(w_1)=1$, $g_{\#}(w_n)=[\iota \cdot (g\circ\lambda_{n}^{-})\cdot \iota^{-}]=c_{n-1}^{-1}\in C$ for all $n\geq 2$, and $g_{\#}(w_{A,\infty})=[\iota \cdot (g\circ \lambda_{n\geq 2}^{-})\cdot \iota^{-}]=c_{\infty}^{-1}$. This implies $c_{\infty}^{-1}\in \cl_{W_A,w_{A,\infty}}(C)$, hence $c_{\infty}\in \cl_{W_A,w_{A,\infty}}(C)$.
\end{proof}

\begin{definition}\label{relscatteredpathproddef}
Let $H\leq \pionex$ be a subgroup. The \textit{scattered path-product closure of} $H$, denoted $\cl_{spp}(H)$, is the subgroup of $\pionex$ generated by the union $\bigcup_{A}\cl_{W_A,w_{A,\infty}}(H)$ taken over all closed scattered sets $A\subset\ui$ containing $\{0,1\}$. We say that $X$ has \textit{well-defined scattered $\Pi_1$-products relative to} $H$ if $H=\cl_{spp}(H)$.
\end{definition}

When $H=1$, the condition $\cl_{spp}(1)=1$ in $\pionex$ is clearly equivalent to well-definedness of scattered $\Pi_1$-products in $X$ (recall (4) of Definition \ref{maindef}). The following lemma is an immediate consequence of the definition of $\cl_{spp}$ and the closure properties of $\cl_{W_A,w_{A,\infty}}$ (recall Lemma \ref{closurepropertieslemma}).

\begin{lemma}
$\cl_{spp}(H)=H$ if and only if $H$ is $(W_A,w_{A,\infty})$-closed for every closed scattered set $A\subset\ui$ containing $\{0,1\}$. In particular, the operator $\cl_{spp}$ has all of the properties appearing in Lemma \ref{closurepropertieslemma}.
\end{lemma}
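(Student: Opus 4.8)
The whole statement should follow from the fixed-point characterization and properties (1)--(4) of the individual operators $\cl_{W_A,w_{A,\infty}}$ recorded in Lemma~\ref{closurepropertieslemma}, together with the fact that $\cl_{spp}(H)$ is a generated union. First I would verify the biconditional by a sandwich argument. If $\cl_{spp}(H)=H$, then for every closed scattered $A\subset\ui$ containing $\{0,1\}$ we have $H\leq \cl_{W_A,w_{A,\infty}}(H)\leq \cl_{spp}(H)=H$, the first inequality being property (1) of Lemma~\ref{closurepropertieslemma} and the second holding because $\cl_{W_A,w_{A,\infty}}(H)$ is one of the subgroups whose union generates $\cl_{spp}(H)$. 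Hence $\cl_{W_A,w_{A,\infty}}(H)=H$, so $H$ is $(W_A,w_{A,\infty})$-closed by the fixed-point half of Lemma~\ref{closurepropertieslemma}. Conversely, if $H$ is $(W_A,w_{A,\infty})$-closed for every such $A$, then $\cl_{W_A,w_{A,\infty}}(H)=H$ for each $A$, the generating union collapses to $H$, and $\cl_{spp}(H)=H$.

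Next I would record that properties (1), (2), and (4) transfer termwise. Extensivity (1) is already contained in the previous paragraph. For monotonicity (2), if $H\leq K$ then $\cl_{W_A,w_{A,\infty}}(H)\leq \cl_{W_A,w_{A,\infty}}(K)\leq \cl_{spp}(K)$ for each $A$ by Lemma~\ref{closurepropertieslemma}(2), so every generator of $\cl_{spp}(H)$ lies in $\cl_{spp}(K)$. For naturality (4), since a homomorphism commutes with the formation of the subgroup generated by a set, Lemma~\ref{closurepropertieslemma}(4) applied to each $A$ gives $f_{\#}(\cl_{spp}(H))=\langle\bigcup_A f_{\#}(\cl_{W_A,w_{A,\infty}}(H))\rangle\leq\langle\bigcup_A \cl_{W_A,w_{A,\infty}}(f_{\#}(H))\rangle=\cl_{spp}(f_{\#}(H))$.

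The one property that is not purely formal is idempotence (3). Via the biconditional, $\cl_{spp}(\cl_{spp}(H))=\cl_{spp}(H)$ amounts to showing that $\cl_{spp}(H)$ is itself $(W_A,w_{A,\infty})$-closed for every scattered $A$; since $\cl_{spp}$ is built in a single pass rather than by transfinite iteration, this must be checked rather than assumed. The cleanest route I would take is to prove the sharper identity $\cl_{spp}(H)=\cl_{C,c_{\infty}}(H)$. The inclusion $\cl_{C,c_{\infty}}(H)\leq \cl_{spp}(H)$ is immediate: taking $A_0=\{\frac{n-1}{n}\mid n\in\bbn\}\cup\{1\}$, Lemma~\ref{omegapathlemma} gives $\cl_{C,c_{\infty}}(H)=\cl_{W_{A_0},w_{A_0,\infty}}(H)$, and $A_0$ is one of the scattered sets appearing in the definition of $\cl_{spp}$. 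The reverse inclusion requires $\cl_{W_A,w_{A,\infty}}(H)\leq \cl_{C,c_{\infty}}(H)$ for every closed scattered $A$, which is the groupoid analogue of Theorem~\ref{mainthm}: one absorbs a general scattered $A$ into the $\omega$-indexed closure by reducing it to the order types $\mathbf{n},\omega,\omega^{\ast},\zeta$ through an induction on its Cantor--Bendixson rank, exactly as in Lemma~\ref{ordertypelemma} and the proof of Theorem~\ref{mainthm}. Once $\cl_{spp}(H)=\cl_{C,c_{\infty}}(H)$ is established, idempotence (3) is inherited directly from Lemma~\ref{closurepropertieslemma}(3) for the genuine closure operator $\cl_{C,c_{\infty}}$. \textbf{This reverse inclusion---absorbing an arbitrary scattered order into the single $\omega$-indexed closure $\cl_{C,c_{\infty}}$---is the step I expect to demand the most care}, since it is where the rank induction and the combinatorics of scattered orders enter; everything else is a formal consequence of Lemma~\ref{closurepropertieslemma} and the definition of $\cl_{spp}$.
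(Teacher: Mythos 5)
Your proposal is correct, but note that it is considerably more careful than the paper, which supplies no written proof at all: it declares the lemma ``an immediate consequence of the definition of $\cl_{spp}$ and the closure properties of $\cl_{W_A,w_{A,\infty}}$.'' Your sandwich argument for the biconditional and the termwise verification of properties (1), (2), (4) are exactly that intended formal content, so there you and the paper agree. The genuine divergence is idempotence (3), and your instinct is right: for the one-pass operator $H\mapsto\langle\bigcup_A\cl_{W_A,w_{A,\infty}}(H)\rangle$, idempotence is \emph{not} a formal consequence of Lemma \ref{closurepropertieslemma} --- a subgroup generated by a union of closures of $H$ under several different closure operators need not be closed under any one of them --- so the paper's ``immediate'' is an overstatement on this single point, and your plan to prove the sharper identity $\cl_{spp}(H)=\cl_{C,c_{\infty}}(H)$ is the right repair. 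Two remarks on execution. First, the reverse inclusion can be had from the paper's own later results without any new induction: by Remark \ref{comparisonremark} it suffices to show $w_{A,\infty}\in\cl_{C,c_{\infty}}(W_A)$ in $\pi_1(\bbw_A,w_0)$, and this is exactly what the converse half of Theorem \ref{mainspathproducts} yields when applied with $X=\bbw_A$, $H=\cl_{C,c_{\infty}}(W_A)$ (which is $(C,c_{\infty})$-closed), and $f=\mathrm{id}$; there is no circularity, since the proof of that theorem uses only the fixed-point sentence of the present lemma, never its idempotence. Second, if you instead carry out the rank induction directly, it is not ``exactly as in'' Lemma \ref{ordertypelemma} and Theorem \ref{mainthm}: those arguments require the cut points to lie in $\alpha^{-1}(x_0)$, whereas the loop $\upsilon\cdot\lambda^{-}$ meets $w_0$ only at its endpoints. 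You would need to induct on the conjugated elements $w_{[a,b]}=[\upsilon_{[0,b]}\cdot\lambda_{[a,b]}^{-}\cdot\upsilon_{[0,a]}^{-}]$, $a,b\in A$, using test maps from $(\bbhp,\bpp)$ whose whisker maps to $\upsilon_{[0,b]}$, in the style of Lemma \ref{omegapathlemma} --- the whisker plays the role that basepoint cuts play in Theorem \ref{mainthm}, which is also the device behind the paper's $\mathbb{Y}$-construction. With either execution your argument is sound, and what it buys over the paper's one-line justification is that property (3) genuinely holds for $\cl_{spp}$ as defined, rather than only for the a priori larger operator obtained by transfinitely iterating the generated union.
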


\begin{remark}
If $N\trianglelefteq \pionex$ is a normal subgroup, then $\cl_{W_A,w_{A,\infty}}(N)$ and $\cl_{W_{A'},w_{A',\infty}}(N)$ are contained in the normal subgroup $\cl_{W_B,w_{B,\infty}}(N)$ where $B=\{0\}\cup \{\frac{t+1}{3}\mid t\in A\}\cup \{\frac{t+2}{3}\mid t\in A'\}$ (See \cite[Lemma 2.9]{BFTestMap}). Hence, $\cl_{spp}(N)=\bigcup_{A}\cl_{W_A,w_{A,\infty}}(N)$ is a normal subgroup. 
\end{remark}

\begin{theorem}\label{mainspathproducts}
A space $X$ has well-defined scattered $\Pi_1$-products relative to $H$ if and only if $H$ is $(C,c_{\infty})$-closed. Moreover, if $X$ is first countable, these properties are equivalent to the homotopically Hausdorff relative to $H$ property.
\end{theorem}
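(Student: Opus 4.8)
The plan is to prove the two asserted equivalences separately: the first by reducing it to a single closure comparison, and the second by appealing to the relative homotopically Hausdorff criterion. By Definition \ref{relscatteredpathproddef}, $X$ has well-defined scattered $\Pi_1$-products relative to $H$ exactly when $\cl_{spp}(H)=H$, and by the lemma characterizing $\cl_{spp}$-closed subgroups this holds iff $H$ is $(W_A,w_{A,\infty})$-closed for every closed scattered $A\subset\ui$ containing $\{0,1\}$. So I would show this whole family of closure conditions is equivalent to $(C,c_{\infty})$-closedness. The direction from the family to $(C,c_{\infty})$-closedness is immediate: applying the hypothesis to $A_0=\{\frac{n-1}{n}\mid n\in\bbn\}\cup\{1\}$ and invoking Lemma \ref{omegapathlemma} gives $\cl_{C,c_{\infty}}(H)=\cl_{W_{A_0},w_{A_0,\infty}}(H)=H$. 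For the converse it suffices, by the comparison criterion of Remark \ref{comparisonremark}, to prove the single membership
\[w_{A,\infty}\in\cl_{C,c_{\infty}}(W_A)\quad\text{in }\pi_1(\bbw_A,w_0)\]
for every closed scattered $A$; this yields $\cl_{W_A,w_{A,\infty}}(K)\le\cl_{C,c_{\infty}}(K)$ for all spaces and subgroups $K$, so $(C,c_{\infty})$-closedness of $H$ forces $H$ to be $(W_A,w_{A,\infty})$-closed.

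This membership is the heart of the matter, and it cannot be obtained directly from $\sct$. Although $\bbw_A$ is one-dimensional and $(\upsilon\cdot\lambda^{-})^{-1}(w_0)=\{0,1\}$, the natural decomposition of $w_{A,\infty}$ into the generators $w_I$ returns to the base-arc points $(t,0)$, $t\in A$, rather than to the basepoint $w_0$, and the conjugating initial segments $\upsilon_{[0,t]}$ do not shrink; hence $w_{A,\infty}\notin\sct(W_A)$ in general and Theorem \ref{mainthm} does not apply as stated. Instead I would prove the membership by transfinite induction on the Cantor--Bendixson rank of $A$, mirroring the proof of Theorem \ref{mainthm}. For $t\le t'$ in $A$ write $W_{[t,t']}=[\upsilon_{[0,t']}\cdot\lambda_{[t,t']}^{-}\cdot\upsilon_{[0,t]}^{-}]$, so that $W_{[0,1]}=w_{A,\infty}$, each single-gap word $W_{[a,b]}$ equals the generator $w_I$ for $I=(a,b)\in\mci(A)$, and these partial words telescope as $W_{[t,t'']}=W_{[t',t'']}\cdot W_{[t,t']}$. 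The inductive claim is that $W_{[t,t']}\in\cl_{C,c_{\infty}}(W_A)$ for all $t\le t'$ in $A$; the base case, $A\cap[t,t']$ finite, is a finite telescoping product of generators and so lies in $W_A$.

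The inductive step is an order-type assembly realized exactly as in the proof of Lemma \ref{omegapathlemma}. Given an $\omega$-chain $t=e_0<e_1<e_2<\cdots$ in $A$ converging to $t'$ that tiles $[t,t']$ into sub-intervals each of strictly smaller rank (so $W_{[e_{n-1},e_n]}\in\cl_{C,c_{\infty}}(W_A)$ by the induction hypothesis), define $f:(\bbhp,\bpp)\to(\bbw_A,w_0)$ by $f\circ\iota=\upsilon_{[0,t']}$ and $f\circ\ell_n\equiv\upsilon_{[e_{n-1},t']}^{-}\cdot\lambda_{[e_{n-1},e_n]}\cdot\upsilon_{[e_n,t']}$. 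The sub-loops $f\circ\ell_n$ lie over $[e_{n-1},t']$ and shrink to the limit point $(t',0)=f(b_0)$, so $f$ is continuous; a direct computation gives $f_{\#}(c_n)=W_{[e_{n-1},e_n]}^{-1}$ and, after a swindle, $f_{\#}(c_{\infty})=W_{[t,t']}^{-1}$. Since $\cl_{C,c_{\infty}}(W_A)$ is itself $(C,c_{\infty})$-closed (idempotence, Lemma \ref{closurepropertieslemma}) and contains every $f_{\#}(c_n)$, hence all of $f_{\#}(C)$, it must contain $f_{\#}(c_{\infty})=W_{[t,t']}^{-1}$, giving $W_{[t,t']}\in\cl_{C,c_{\infty}}(W_A)$. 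The $\omega^{\ast}$- and $\zeta$-cases are symmetric, and assembling them over the $S_{\lambda}$-filtration of $A\cap[t,t']$ (whose top level has gaps of order type $\mathbf{n}$, $\omega$, $\omega^{\ast}$, or $\zeta$, each sub-interval of lower rank by the analogue of Corollary \ref{indlemma1}) completes the induction and yields $w_{A,\infty}=W_{[0,1]}\in\cl_{C,c_{\infty}}(W_A)$.

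For the final clause, when $X$ is first countable, I would identify $(C,c_{\infty})$-closedness of $H$ with the homotopically Hausdorff relative to $H$ property. This is the $H$-relative version of the equivalence in Theorem \ref{groupthm} between $(C,c_{\infty})$-closedness of the trivial subgroup and the homotopically Hausdorff property, and I would adapt the last two implications of that proof, replacing the role of non-trivial elements of $\pionex$ by elements lying outside the cosets $H^{\alpha}=[\alpha^{-}]H[\alpha]$ of Definition \ref{homhausdorffdef}: the failure of closedness produces a map $\bbhp\to X$ witnessing arbitrarily small non-trivial $H^{\alpha}$-cosets, while first countability allows a nested neighborhood base to convert such a relative witness back into a map $\bbhp\to X$; alternatively one may cite \cite[Theorem 3.6]{BFTestMap}. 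The main obstacle throughout is the membership $w_{A,\infty}\in\cl_{C,c_{\infty}}(W_A)$ for arbitrary scattered $A$: the long conjugating segments $\upsilon_{[0,t]}$ block any one-step $\sct$-argument and are precisely what force the rank induction, with the $\omega$-assembly of Lemma \ref{omegapathlemma} supplying the essential successor step.
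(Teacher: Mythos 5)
Your proposal is correct in substance, but it proves the crucial implication by a genuinely different route than the paper. For ``$H$ is $(C,c_{\infty})$-closed $\Rightarrow$ $\cl_{spp}(H)=H$,'' you reduce via Remark \ref{comparisonremark} to the universal membership $w_{A,\infty}\in\cl_{C,c_{\infty}}(W_A)$ in $\pi_1(\bbw_A,w_0)$, and you prove that by redoing the transfinite rank induction of Theorem \ref{mainthm} on the conjugated partial words $W_{[t,t']}=[\upsilon_{[0,t']}\cdot\lambda_{[t,t']}^{-}\cdot\upsilon_{[0,t]}^{-}]$, with explicit maps $(\bbhp,\bpp)\to(\bbw_A,w_0)$ supplying the assembly steps. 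You correctly identify the obstruction that makes this necessary: the loop $\upsilon\cdot\lambda^{-}$ returns to base-arc points $(t,0)$ rather than to $w_0$, so $w_{A,\infty}\notin\sct(W_A)$ and Theorem \ref{mainthm} cannot be quoted. The paper faces the same obstruction (``$F(\bbw_A,A)=1$ if $0$ is a limit point of $A$'') but resolves it the opposite way: it works relative to the given map $f:(\bbw_A,w_0)\to(X,x_0)$, attaches whisker $1$-cells to $\bbw_A$ to form a one-dimensional space $\mathbb{Y}$ whose loops can return to the basepoint, extends $f$ to $g:\mathbb{Y}\to X$ with $g_{\#}(F(\mathbb{Y},A))\leq H$ (Lemma \ref{technicallemma}), and then applies the computation $\cl_{C,c_{\infty}}(F(\mathbb{Y},A))=\sct(\mathbb{Y},A)$ of Theorem \ref{onedtheorem} plus functoriality. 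Your route buys self-containedness and a stronger conclusion: it avoids the auxiliary space $\mathbb{Y}$, the case analysis of Lemma \ref{technicallemma}, and the one-dimensional reduced-path theory behind Theorem \ref{onedtheorem} (Proposition \ref{sctxaisccinfclosedprop} is imported from \cite{BFTestMap}), and it yields $\cl_{spp}(H)=\cl_{C,c_{\infty}}(H)$ for \emph{every} subgroup $H$; the paper's route buys brevity, doing the rank induction once (Theorem \ref{mainthm}) and transferring it rather than repeating it in conjugated form. Two points in your sketch need fleshing out, though neither is a real obstacle: (i) in the $\omega^{\ast}$ case, ``symmetry'' is not automatic because reversing the loop representing $W_{[t,t']}$ does not reverse the order of its gaps; you must base the loops $f\circ\ell_n$ at the limit point $(t,0)$ and orient them as $\upsilon_{[t,e_{n-1}]}\cdot\lambda_{[e_n,e_{n-1}]}^{-}\cdot\upsilon_{[t,e_n]}^{-}$ so the infinite concatenation telescopes; and (ii) your induction as described handles successor ranks only, so when the rank of $A\cap[t,t']$ is a limit ordinal you need the nested-interval argument from the limit case of Theorem \ref{mainthm} (a cofinal sequence $\kappa_n$, intervals with endpoints in $S_{\kappa_n}$, and your $\zeta$-assembly applied to the chain $\cdots<a_2<a_1<b_1<b_2<\cdots$), which your machinery does support.
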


\begin{proof}
We focus on the first statement since \cite[Theorem 3.6]{BFTestMap} states that homotopically Hausdorff rel. $H$ $\Rightarrow$ $H$ is $(C,c_{\infty})$-closed and that the converse holds when $X$ is first countable.

Suppose $X$ has well-defined scattered $\Pi_1$-products rel. $H\leq\pionex$, i.e. $\cl_{spp}(H)=H$. Then $\cl_{W_A,w_{\infty}}(H)=H$ for $A= \{\frac{n-1}{n}\mid n\in\bbn\}\cup\{1\}$. By Lemma \ref{omegapathlemma}, we have $\cl_{C,c_{\infty}}(H)=\cl_{W_A,w_{\infty}}(H)=H$, showing that $H$ is $(C,c_{\infty})$-closed. For the converse, suppose $H$ is $(C,c_{\infty})$-closed. Fix a closed scattered set $A\subseteq\ui$ containing $\{0,1\}$. We check that $H$ is $(W_A,w_{A,\infty})$-closed. Suppose $f:(\bbw_A,w_0)\to (X,x_0)$ is a map such that $f_{\#}(W_A)\leq H$. We will prove that $f_{\#}(w_{A,\infty})=f_{\#}([\upsilon\cdot \lambda^{-}])\in H$.

Identify $A$ with the subspace $A\times \{0\}$ of $\bbw_A$. Since $F(\bbw_A,A)=1$ if $0$ is a limit point of $A$, we must consider an alternative test space. Let $\mathbb{Y}$ be the space obtained by attaching a 1-cell $L_I$ to $\bbw_A$ with boundary $\{w_0,a\}$ for each $I=(a,b)\in\mci(A)$ with $a>0$. We give $\mathbb{Y}$ the weak topology with respect to the subspaces $\bbw_A$ and $L_I$ for $I=(a,b)\in\mci(A)$ with $a>0$. Although $\mathbb{Y}$ will not always be metrizable, $\mathbb{Y}$ is homotopy equivalent to a one-dimensional subspace $\mathbb{Y}'\subset\bbr^3$ by a bijective homotopy equivalence $\mathbb{Y}\to\mathbb{Y}'$ which maps $\bbw_A$ to $\bbw_A\times \{0\}$ and each 1-cell $L_I$ to an arc of diameter $\geq 1$ in $\bbr^3$ that connects $(0,0,0)$ to $(a,0,0)$ for each $I=(a,b)\in\mci(A)$ with $a>0$. Therefore, we may apply results for one-dimensional metric spaces to $\mathbb{Y}$.

Let $i:\bbw_A\to \mathbb{Y}$ be the inclusion map and $\rho_I:\ui\to \mathbb{Y}$ be the arc from $w_0$ to $a=\inf(I)$ along the attached 1-cell $L_I$. Define $g:\mathbb{Y}\to X$ so that $g|_{\bbw_A}=f$ and $g\circ \rho_I\equiv \upsilon|_{[0,a]}$ for $I=(a,b)\in\mathcal{I}(A)$, $a\neq 0$. The continuity of $g$ is clear since $\mathbb{Y}$ has the weak topology. Notice that $[\upsilon\cdot\lambda^{-}]\in \sct(\mathbb{Y},A)$.

First, observe that $g_{\#}(F(\mathbb{Y},A)) \leq  H$. Since a detailed proof of this inclusion requires a straightforward but somewhat tedious decomposition of the elements of $F(\mathbb{Y},A)$, we give a separate proof below in Lemma \ref{technicallemma}. Recalling the existence of a bijective homotopy equivalence between $\mathbb{Y}$ and a one-dimensional metric space, we have $\sct(\mathbb{Y},A)=\cl_{C,c_{\infty}}(F(\mathbb{Y},A))$ by Theorem \ref{onedtheorem}. Combining this with the inclusion $g_{\#}(F(\mathbb{Y},A))\leq H$ and the closure properties from Lemma \ref{closurepropertieslemma}, we have
\begin{eqnarray*}
f_{\#}(w_{A,\infty})=g_{\#}([\upsilon\cdot\lambda^{-}]) &\in & g_{\#}(\sct(\mathbb{Y},A))\\
&= & g_{\#}(\cl_{C,c_{\infty}}(F(\mathbb{Y},A)))\\
& \leq & \cl_{C,c_{\infty}}(g_{\#}(F(\mathbb{Y},A)))\\
& \leq & \cl_{C,c_{\infty}}(H)\\
&=& H
\end{eqnarray*}
where the last equality holds since $H$ is $(C,c_{\infty})$-closed.
\end{proof}


\begin{lemma}
\label{technicallemma}
Consider the spaces, maps, and groups as constructed in the proof of Theorem \ref{mainspathproducts}. Then $g_{\#}(F(\mathbb{Y},A)) \leq  H$.
\end{lemma}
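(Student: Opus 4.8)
The plan is to use the finiteness of $\gamma^{-1}(A)$ to cut a representing loop into finitely many arcs, each lying in a single component of $\mathbb{Y}\backslash A$, and then to reassemble these arcs into a product of loops based at $w_0$ on which $g_{\#}$ can be computed by hand. First I would record the components of $\mathbb{Y}\backslash A$. Since the semicircles $C_I$ and the attached $1$-cells $L_I$ meet the base-arc only at points of $A=A\times\{0\}$, deleting $A$ breaks $\mathbb{Y}$ into three families of open arcs indexed by $I=(a,b)\in\mci(A)$: the open base intervals, the open semicircles, and the open $1$-cells. The essential point is that the closure of each such component meets $A$ only in endpoints of components of $\ui\backslash A$, namely in $\{a,b\}$ or $\{w_0,a\}$.

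Given a non-constant $\gamma$ with $[\gamma]\in F(\mathbb{Y},A)$ and $\gamma^{-1}(A)=\{0=t_0<t_1<\cdots<t_m=1\}$, I would set $\delta_j=\gamma|_{[t_{j-1},t_j]}$, so that each $\delta_j$ has interior in a single component $U_j$ and runs between two points $p_{j-1},p_j\in\overline{U_j}\cap A$; by the previous paragraph these endpoints are always endpoints of components of $\ui\backslash A$, so no crossing occurs at a two-sided limit point of $A$. For each such endpoint $p$ I would fix a tail path $\tau_p$ from $w_0$ to $p$ having finite $A$-preimage and satisfying $g\circ\tau_p\equiv f\circ\upsilon_{[0,p]}$: when $p=\inf I$ I take $\tau_p=\rho_I$ (constant if $\inf I=0$), for which $g\circ\rho_I\equiv f\circ\upsilon_{[0,\inf I]}$ by construction; when $p=\sup I$ is not the infimum of any component, I take $\tau_p=\rho_I\cdot\upsilon_I$, so that $g\circ\tau_p\equiv f\circ(\upsilon_{[0,a]}\cdot\upsilon_I)\equiv f\circ\upsilon_{[0,p]}$. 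Every interval endpoint is an infimum or a sole supremum, so this covers all cases. Taking $\tau_{w_0}$ constant and telescoping gives
\[ [\gamma]=\prod_{j=1}^{m}[\tau_{p_{j-1}}\cdot\delta_j\cdot\tau_{p_j}^{-}] \]
in $\pi_1(\mathbb{Y},w_0)$.

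It then remains to evaluate $g_{\#}$ on each factor $\epsilon_j=\tau_{p_{j-1}}\cdot\delta_j\cdot\tau_{p_j}^{-}$. Because each component closure is an arc, $\delta_j$ is homotopic rel endpoints to the canonical traversal of its component, so $\delta_j$ is, up to reversal and constant paths, one of $\lambda_I$, $\upsilon_I$, or $\rho_I$. Using the identity $g\circ\tau_p\equiv f\circ\upsilon_{[0,p]}$ together with $\upsilon_{[0,a]}\cdot\upsilon_I\equiv\upsilon_{[0,b]}$, the semicircle and $1$-cell cases cancel to the trivial class, while the base-arc case across $I=(a,b)$ yields exactly $f_{\#}(w_I^{\pm1})$. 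Since $f_{\#}(W_A)\leq H$, every factor satisfies $g_{\#}([\epsilon_j])\in H$, and therefore $g_{\#}([\gamma])=\prod_{j}g_{\#}([\epsilon_j])\in H$.

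The main obstacle is the consistent choice of tail paths at right-limit endpoints $p=\sup I$, where no $1$-cell is attached directly to $p$; here I must verify that the path $\rho_I\cdot\upsilon_I$ simultaneously keeps its $A$-preimage finite and satisfies the matching identity $g\circ\tau_p\equiv f\circ\upsilon_{[0,p]}$, since it is precisely this identity that forces the semicircle contributions to cancel and reduces every surviving contribution to a generator $w_I$ of $W_A$.
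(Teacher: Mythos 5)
Your proof is correct, and it takes a genuinely different route from the paper's. The paper first passes to the \emph{reduced} representative of a class in $F(\mathbb{Y},A)$ --- invoking the one-dimensionality of $\mathbb{Y}$ (via the bijective homotopy equivalence with a one-dimensional metric space) together with the fact that reduction preserves finiteness of the $A$-preimage --- and then runs a structural case analysis (its Cases I--IV), decomposing the reduced loop into $w_0$-based subloops of recognizable global forms built from $\rho_I$, $\upsilon_K$, $\lambda_K$, each evaluated by hand. You instead take an arbitrary representative $\gamma$ with $\gamma^{-1}(A)$ finite (which exists by the very definition of $F(\mathbb{Y},A)$, so no reduction is needed), cut it at all points of $\gamma^{-1}(A)$, observe that each piece lies in the closure of a single component of $\mathbb{Y}\setminus A$ --- an arc meeting $A$ only in endpoints of components of $\ui\setminus A$, which is your substitute for the paper's observation that no subpath can cross the line $x=a$ at a two-sided limit point $a$ of $A$ --- and reassemble via tail paths $\tau_p$ normalized by the identity $g\circ\tau_p\equiv f\circ\upsilon|_{[0,p]}$. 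That normalization collapses the paper's four cases into one uniform computation: semicircle and $1$-cell pieces contribute trivially, base-arc pieces across $I$ contribute $f_{\#}(w_I^{\pm 1})$, and pieces returning through the same endpoint are homotopic rel endpoints to constants inside the arc. What your approach buys: this lemma becomes independent of Eda's reduced-path theory, so the one-dimensionality of $\mathbb{Y}$ is needed only in the surrounding proof of Theorem \ref{mainspathproducts} (for $\sct(\mathbb{Y},A)=\cl_{C,c_{\infty}}(F(\mathbb{Y},A))$ via Theorem \ref{onedtheorem}), not here. What the paper's approach buys: an explicit generating set for $F(\mathbb{Y},A)$, which is more structural information. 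One minor remark: the finiteness of $\tau_p^{-1}(A)$ that you are careful to arrange is never actually used; the telescoping identity and the evaluation of $g_{\#}$ on the conjugated pieces require only the matching identity $g\circ\tau_p\equiv f\circ\upsilon|_{[0,p]}$.
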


\begin{proof}
Recalling that $f_{\#}(W_A)\leq H$, it suffices to show $g_{\#}(F(\mathbb{Y},A))\leq f_{\#}(W_A)$. To prove this inclusion, we identify a set of generators for $F(\mathbb{Y},A)$ using a few cases. Recall that $g\circ \rho_{I}=f\circ\upsilon_{[0,a]}$ whenever $I=(a,b)\in\mci(A)$ with $a>0$.

Case I: In the case that $0$ is an isolated point of $A$, there is a maximal $a\in (0,1]\cap A$ such that $[0,a]\cap A$ is finite. In this case, $\bbw_A\cap( [0,a]\times [0,1/2])$ is a finite graph which is the union of $[0,a]\times \{0\}$ and finitely many semicircles $C_I$, $I\in\mathcal{I}([0,a]\cap A)$. Thus $\pi_1(\bbw_A\cap ([0,a]\times [0,1/2]),w_0)$ is freely generated by $\{w_I\mid I\in \mathcal{I}([0,a]\cap A)\}$. Hence if $\alpha:\ui \to \bbw_A\cap ([0,a]\times [0,1/2])$ is a loop based at $w_0$, then $g_{\#}([\alpha])\in g_{\#}( i_{\#}(W_A))=f_{\#}(W_A)$.

Case II: Suppose $I=(a,b)\in\mathcal{I}(A)$ where $a>0$ and $b$ is not an isolated point of $A$. Then \[g_{\#}([\rho_I\cdot \upsilon_{[a,b]}\cdot \lambda_{[a,b]}^{-}\cdot \rho_{I}^{-}])=[(f\circ\upsilon|_{[0,b]})\cdot (f\circ\lambda_{[a,b]})^{-}\cdot (f\circ\upsilon|_{[0,a]})^{-}]=f_{\#}(w_I)\in f_{\#}(W_A).\]

Case III: Suppose $I=(a,b)\in\mathcal{I}(A)$ where $a>0$ and $b$ is an isolated point of $A$. Then we may write $J=(b,c)\in \mathcal{I}(A)$. We have $g_{\#}([\rho_I\cdot \upsilon_{[a,b]}\cdot \rho_{J}^{-}])=1$ and $g_{\#}([\rho_I\cdot \lambda_{[a,b]}\cdot \rho_{J}^{-}])=f_{\#}(w_{I}^{-1})\in f_{\#}(W_A)$.

Case IV: Fix (possibly equal) $I=(a,b)$ and $J=(c,d)$ in $\mci(A)$ where $A\cap[\min\{a,c\},\max\{b,d\}]$ is finite. Consider any reduced loop of the form $\alpha=\rho_{I}\cdot \beta\cdot \rho_{J}^{-}$ where $\beta$ is a finite concatenation of paths of the form $\upsilon_{K},\lambda_{K},\upsilon_{K}^{-}$, or $\lambda_{K}^{-}$ for $K\in\mci(A)$. Observe that $\alpha$ is homotopic to a finite concatenation of loops from either Case II or Case III. Hence, $g_{\#}([\alpha])$ factors as a product of elements in the subgroup $ f_{\#}(W_A)$ and must therefore be in that subgroup.

For the general case, let $\alpha:\ui\to \mathbb{Y}$ be a loop based at $w_0$ such that $[\alpha]\in F(\mathbb{Y},A)$. Since $\mathbb{Y}$ is one-dimensional, we may assume $\alpha$ is reduced and, thus, that $\alpha^{-1}(A)$ is finite. If $a$ is a limit point of $A$, then no subpath of $\alpha$ can cross the line $x=a$ within $\bbw_A$. Therefore, $\alpha$ is a reparameterization of a finite concatenation $\prod_{i=1}^{m}\alpha_i$ of loops $\alpha_i:\ui\to\mathbb{Y}$ based at $w_0$ where either 
\begin{enumerate}
\item $\alpha_i$ is a reparameterization of a finite concatenation of paths of the form $\upsilon_{K},\lambda_{K},\upsilon_{K}^{-}$, or $\lambda_{K}^{-}$ for $K\in\mci(A)$ (this can only occur if $0$ is an isolated point of $A$) as described in Case I.
\item $\alpha_i$ is a reparameterization of $\rho_{I}\cdot \beta_i\cdot \rho_{J}^{-}$ for intervals $I=(a,b)$ and $J=(c,d)$ in $\mathcal{I}(A)$ (where the two are possibly equal) where $A\cap[\min\{a,c\},\max\{b,d\}]$ is finite and $\beta_i$ is a reparameterization of a finite concatenation of paths of the form $\upsilon_{K},\lambda_{K},\upsilon_{K}^{-}$, or $\lambda_{K}^{-}$ for $K\in\mci(A)$. In this situation, $\alpha_i$ is a loop described by Case IV.
\end{enumerate}
In any of these cases (for all $i$), we have $g_{\#}([\alpha_i])\in f_{\#}(W_A)$ and, thus, $g_{\#}([\alpha])\in f_{\#}(W_A)$.
\end{proof}

\begin{remark}\label{lastremark}
Finally, we note how the proof of Theorem \ref{overallthm} is established from the above results. Let ($\ast$) represent the property: ``the trivial subgroup of $\pionex$ is $(C,c_{\infty})$-closed." Theorem \ref{groupthm} gives the equivalence of ($\ast$) with (1) and (2) for arbitrary spaces and with (5) if $X$ is first countable. The last two equivalences also hold for arbitrary spaces: Lemma \ref{omegapathlemma} in the case $H=1$ is ($\ast$) $\Leftrightarrow$ (3) and Theorem \ref{mainspathproducts} in the case $H=1$ is ($\ast$) $\Leftrightarrow$ (4). 
\end{remark}

\section{A remark on dense products}

We conclude with a brief remark on products over non-scattered order types. In light of Theorem \ref{overallthm}, it is natural to ask if the well-definedness of products indexed over any countable linear order, including dense orders, is also equivalent to the homotopically Hausdorff property.

\begin{definition}
Let $X$ be a path-connected space. We say that $X$ has
\begin{enumerate}
\item \textit{well-defined transfinite $\pi_1$-products} if for any $x\in X$ and maps $f,g:(\bbh,b_0)\to (X,x)$ from the Hawaiian earring such that $f\circ\ell_n\simeq g\circ\ell_n$ for each $n\in\bbn$, then $f_{\#}=g_{\#}$.
\item \textit{well-defined transfinite $\Pi_1$-products} if for any paths $\alpha,\beta:\ui\to X$ that agree on a closed set $S\subset\ui$ containing $\{0,1\}$ and such that $\alpha|_{[a,b]}$ is path-homotopic to $\beta|_{[a,b]}$ for each component $(a,b)$ of $\ui\backslash S$, then $[\alpha]=[\beta]$ in $\Pi_1(X)$.
\end{enumerate}
\end{definition}

The above definitions appear in \cite{BFTestMap} as Definitions 3.17 and 7.1 respectively, where they are both characterized in terms of closure operators.

\begin{question}
If $X$ is a homotopically Hausdorff Peano continuum, must $X$ have well-defined transfinite $\pi_1$-products and/or well-defined transfinite $\Pi_1$-products?
\end{question}

\end{document}